\newtheorem{thm}{Theorem}[section]
\newtheorem{lem}[thm]{Lemma}
\newtheorem{prop}[thm]{Proposition}
\newtheorem{conj}[thm]{Conjecture}
\theoremstyle{definition}
\newtheorem{question}[thm]{Question}
\newtheorem{rem}[thm]{Remark}
\newtheorem{ex}[thm]{Example}
\newcommand{\F}{\mathbb{F}}
\newcommand{\Fq}{\mathbb{F}_q}
	\def\MR#1{}
\begin{document}

\title{Plane-filling curves of small degree over finite fields}

\author{Shamil Asgarli}
\address{Department of Mathematics and Computer Science \\ Santa Clara University \\ 500 El Camino Real \\ USA 95053}
\email{sasgarli@scu.edu}

\author{Dragos Ghioca}
\address{Department of Mathematics \\ University of British Columbia \\ 1984 Mathematics Road \\ Canada V6T 1Z2}
\email{dghioca@math.ubc.ca}

\subjclass[2020]{Primary: 14G15, 14H50; Secondary: 11G20, 14G05}
\keywords{Plane curve, space-filling curve, smooth curve, finite field}

\begin{abstract}
A plane curve $C$ in $\mathbb{P}^2$ defined over $\mathbb{F}_q$ is called plane-filling if $C$ contains every $\mathbb{F}_q$-point of $\mathbb{P}^2$. Homma and Kim, building on the work of Tallini, proved that the minimum degree of a smooth plane-filling curve is $q+2$. We study smooth plane-filling curves of degree $q+3$ and higher. 
\end{abstract}

\maketitle 

\section{Introduction}

The study of space-filling curves in $\mathbb{R}^2$ starts with the work of Peano \cite{Pea90} in the 19th century. About 100 years later, Nick Katz \cite{Kat99} studied space-filling curves over finite fields and raised open questions about their existence. One version of Katz's question was the following. Given a smooth algebraic variety $X$ over a finite field $\mathbb{F}_q$, does there always exist a \emph{smooth} curve $C\subset X$ such that $C(\mathbb{F}_q)=X(\mathbb{F}_q)$? In other words, is it possible to pass through all of the (finitely many) $\mathbb{F}_q$-points of $X$ using a smooth curve? Gabber~\cite{Gab01} and Poonen \cite{Poo04} independently answered this question in the affirmative. 

We will consider the special case when $X=\mathbb{P}^2$. We say that a curve $C\subset \mathbb{P}^2$ is \emph{plane-filling} if $C(\mathbb{F}_q)=\mathbb{P}^2(\mathbb{F}_q)$. Equivalently, $C$ is a plane-filling curve $C$ if $\# C(\mathbb{F}_q) = q^2+q+1$. In a natural sense, plane-filling curves are extremal. There are other classes of extremal curves with respect to the set of $\mathbb{F}_q$-points, including blocking curves \cite{AGY23} and tangent-filling curves \cite{AG23}.

From Poonen's work~\cite{Poo04}, we know that there exist smooth plane-filling curves of degree $d$ over $\mathbb{F}_q$ whenever $d$ is sufficiently large with respect to $q$. It is natural to ask for the minimum degree of a smooth plane-filling curve over $\mathbb{F}_q$. Homma and Kim \cite{HK13} proved that the minimum degree is $q+2$.  More precisely, by building on the work of Tallini \cites{Tal61a, Tal61b}, they showed that a plane-filling curve of the form
$$
(ax+by+cz)(x^q y - x y^q) + y(y^q z - y z^q) + z (z^q x - z x^q) = 0 
$$
is smooth if and only if the polynomial $t^3-(c t^2 + b t + a) \in\mathbb{F}_q[t]$ has no $\mathbb{F}_q$-roots. In a sequel paper \cite{Hom20}, Homma investigated further properties of plane-filling curves of degree $q+2$.  The automorphism group of these special curves was studied by Duran Cunha \cite{Dur18}. As another direction, Homma and Kim \cite{HK23} investigated space-filling curves in $\mathbb{P}^{1}\times\mathbb{P}^1$.

In this paper, we investigate the existence of smooth plane-filling curves of degree $q+3$ and higher. The guiding question for our paper is the following. 

\begin{question}\label{quest:q+3}
Let $q$ be a prime power. Does there exist a smooth plane-filling curve of degree $q+3$ defined over $\mathbb{F}_q$?
\end{question}

The three binomials $x^q y - x y^q, y^q z - y z^q$, and $z^q x - z x^q$ generate the ideal of polynomials defining plane-filling curves; see \cite{HK13}*{Proposition 2.1} for proof of this assertion. Thus, any plane-filling curve of degree $q+3$ must necessarily be defined by 
$$
Q_1(x, y, z)  \cdot (x^q y - x y^q) + Q_2(x, y, z)\cdot (y^q z - y z^q) + Q_3(x, y, z) \cdot (z^q x - z x^q)=0
$$
for some homogeneous quadratic polynomials $Q_1, Q_2, Q_3\in\F_q[x,y,z]$. The difficulty is finding suitable $Q_1, Q_2, Q_3$ for which the corresponding curve is smooth.

Our first result gives a necessary and sufficient condition for the plane-filling curve $C_{k}$ to be smooth at all the $\mathbb{F}_q$-points. 

\begin{thm}\label{thm:q+3:Fq:points} For each $k\in\mathbb{F}_q$, consider the plane-filling curve $C_{k}$ defined by
\begin{equation}\label{eq:curve:q+3}
x^2 (x^q y - x y^q) + y^2 (y^q z - y z^q) + (z^2 + k x^2)(z^q x - z x^q) = 0.
\end{equation}
Then $C_k$ is smooth at every $\mathbb{F}_q$-point of $\mathbb{P}^2$ if and only if the polynomial $x^7 + kx^3 -1$ has no zeros in $\mathbb{F}_q$.
\end{thm}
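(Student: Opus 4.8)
The curve $C_k$ passes through every $\mathbb{F}_q$-point of $\mathbb{P}^2$ by construction (the three binomials vanish on all $\mathbb{F}_q$-points), so the only question is smoothness at those points. The plan is to compute the partial derivatives of the defining polynomial $F = x^2(x^qy - xy^q) + y^2(y^qz - yz^q) + (z^2+kx^2)(z^qx - zx^q)$ and evaluate them at an arbitrary point $P \in \mathbb{P}^2(\mathbb{F}_q)$. Over $\mathbb{F}_q$, the Frobenius-type terms simplify dramatically: at a point with coordinates in $\mathbb{F}_q$, one has $x^q = x$, etc., so while $F$ itself vanishes, its derivatives do not, because terms like $\partial_x(x^qy)$ produce both $q x^{q-1}y = 0$ (in characteristic $p$, if $p \mid q$... more precisely $qx^{q-1}y$ has coefficient $q \equiv 0$) and $x^q \partial_x(\text{rest})$-type contributions. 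I would carefully record that at $P\in\mathbb{P}^2(\mathbb{F}_q)$, the gradient $\nabla F(P)$ has a clean closed form — each $\partial F/\partial x_i$ evaluates to a polynomial of degree $q+3$ in the coordinates of $P$, but with the $q$-power terms replaced by ordinary powers, so effectively a polynomial expression of bounded degree in $x,y,z \in \mathbb{F}_q$.

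The main computation is then a case analysis over the $\mathbb{F}_q$-points, organized by how many coordinates vanish. First handle the three coordinate points $[1:0:0]$, $[0:1:0]$, $[0:0:1]$ and check directly that at least one partial derivative is nonzero (this should be automatic, independent of $k$). Next handle the points on the coordinate lines, e.g. $[x:y:0]$ with $xy\neq 0$, $[0:y:z]$, $[x:0:z]$ — here the gradient simplifies further and one should again find non-vanishing with no constraint on $k$. Finally, the crucial case is points $[x:y:z]$ with $xyz \neq 0$; by scaling we may take $z = 1$, so $P = [x:y:1]$ with $x, y \in \mathbb{F}_q^*$. The three equations $\partial_x F(P) = \partial_y F(P) = \partial_z F(P) = 0$ should, after using $F(P) = 0$ (Euler's relation gives a linear dependence, so really only two independent conditions) and some elimination, reduce to a single polynomial constraint relating $x$ and $y$, and then collapse to the statement that $x$ is a root of $t^7 + kt^3 - 1$. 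I expect the elimination to go as follows: solve two of the derivative-vanishing equations for $y$ (or for a monomial in $y$) as a rational function of $x$, substitute into the third, clear denominators, and watch the expression factor with $x^7 + kx^3 - 1$ as the essential factor (other factors being units or powers of $x$, which are nonzero since $x \neq 0$).

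The hard part will be the bookkeeping in this last case: getting the partial derivatives right (the term $(z^2+kx^2)(z^qx - zx^q)$ contributes to all three derivatives, and the $kx^2$ piece is what injects the parameter $k$ into the final polynomial), and then carrying out the elimination cleanly enough to see the factor $x^7 + kx^3 - 1$ emerge rather than a messier equivalent. I would guess that after setting $z=1$ and simplifying Frobenius terms, $\partial_z F(P)$ and $\partial_y F(P)$ each become low-degree polynomials in $x, y$ that are linear or quadratic in $y$, allowing $y$ to be eliminated explicitly; substituting into $\partial_x F(P) = 0$ and simplifying modulo $x \neq 0$ then yields $x^7 + kx^3 - 1 = 0$. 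Conversely, if $x_0 \in \mathbb{F}_q$ satisfies $x_0^7 + kx_0^3 - 1 = 0$, one reverses the computation to produce a corresponding $y_0 \in \mathbb{F}_q$ (checking $y_0 \neq 0$ and $y_0 \in \mathbb{F}_q$) giving a singular $\mathbb{F}_q$-point $[x_0:y_0:1]$ on $C_k$ — this direction requires verifying that the eliminated value of $y$ is indeed well-defined and lies in $\mathbb{F}_q^*$, which should follow from the shape of the elimination.
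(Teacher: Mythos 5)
Your plan is essentially the paper's proof: the paper establishes the general statement for $C_{k,r}$ (Theorem~\ref{thm:higher-degree}, with the present theorem the case $r=2$) by writing down the three partial derivative equations, using $x^q=x$, $y^q=y$, $z^q=z$ at an $\mathbb{F}_q$-point, ruling out singular points with $xyz=0$, and then eliminating variables; it also makes the same observation you make, namely that $F(P)=0$ is automatic at $\mathbb{F}_q$-points because the curve is plane-filling (which is exactly what saves the argument when $p\mid\deg F$, i.e.\ $p=3$ here). So the strategy is sound and matches the paper.

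Two caveats, one of which is a concrete misprediction you should fix before executing the plan. First, the proposal is only an outline: none of the derivative computations, the coordinate-vanishing cases, or the elimination is actually carried out. Second, your choice of normalization $z=1$ does \emph{not} lead to the polynomial $x^7+kx^3-1$ in the affine coordinate $x$. At an $\mathbb{F}_q$-point $[x:y:1]$ with $xy\neq 0$ the three conditions become $x^2y=1+kx^2$, $y^2=x^3$, $y^3=x+kx^3$ (the third is dependent, via Euler plus $F(P)=0$, as you note), and eliminating $y$ gives $(1+kx^2)^2=x^7$, i.e.\ $x^7-k^2x^4-2kx^2-1=0$ --- not $x^7+kx^3-1=0$. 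The variable that satisfies $t^7+kt^3-1=0$ is the ratio $t=x/y$, so you would still need a change-of-variables argument to match the stated polynomial. The paper avoids this entirely by normalizing the \emph{second} coordinate to $1$ (legitimate, since points with $y=0$ are shown to be nonsingular): then $F_y=0$ reads $z=x^3$, and substituting into the other two equations immediately yields $x^7+kx^3-1=0$, with the converse direction equally immediate (the singular point is $[x_0:1:x_0^3]$). With that adjustment your plan goes through exactly as in the paper.
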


To ensure that the previous theorem is not vacuous, we need to show that there exists some $k\in\mathbb{F}_q$ such that $x^7+k x^3-1$ has no zeros in $\mathbb{F}_q$. 

\begin{prop}\label{prop:existence-good-k} There exists a value $k\in\mathbb{F}_q$ such that $x^7 + kx^3 -1 \in\mathbb{F}_q[x]$ has no zeros in $\mathbb{F}_q$.
\end{prop}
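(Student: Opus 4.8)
\emph{Proof sketch.} The plan is to translate the statement into a simple counting argument. Set $f_k(x) = x^7 + kx^3 - 1 \in \F_q[x]$. First I would observe that $x = 0$ is never a root, since $f_k(0) = -1 \ne 0$. Consequently, for a fixed $k \in \F_q$, the polynomial $f_k$ has a zero in $\F_q$ if and only if there is some $x \in \F_q \setminus \{0\}$ with $x^7 + kx^3 = 1$; dividing by $x^3$, this happens precisely when $k = x^{-3} - x^4$. Hence the set of ``bad'' parameters
$$
B = \{\, k \in \F_q : f_k \text{ has a zero in } \F_q \,\}
$$
is exactly the image of the map $g \colon \F_q \setminus \{0\} \to \F_q$, $g(x) = x^{-3} - x^4$.

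The second step is a pigeonhole count: since the domain of $g$ has only $q - 1$ elements, $\# B = \# g(\F_q \setminus \{0\}) \le q - 1 < q = \# \F_q$. Therefore $B \ne \F_q$, and any $k \in \F_q \setminus B$ has the required property.

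The argument has essentially no obstacle: the only point needing care is checking that $0$ is excluded as a root, so that the passage from $f_k(x) = 0$ to $k = x^{-3} - x^4$ is legitimate, after which the bound $q - 1 < q$ does all the work. Equivalently, one could instead sum $\#\{\, x \in \F_q : f_k(x) = 0 \,\}$ over all $k \in \F_q$, obtaining a total of at most $q - 1$ (at most one $k$ for each nonzero $x$), and conclude that some $k$ contributes $0$; but describing $B$ directly, as above, is cleaner and avoids worrying about multiplicities of roots.
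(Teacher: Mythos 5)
Your argument is correct and is essentially identical to the paper's own proof: both note that $x=0$ is never a root and that each nonzero $x$ determines a unique bad $k$ (namely $k = x^{-3}-x^{4}$), so there are at most $q-1$ bad values of $k$, leaving at least one good one.
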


\begin{proof}
When $x=0$, there is no $k\in\Fq$ such that $x^7+kx^3-1=0$. For each $x\in\mathbb{F}_q^{\ast}$, there is a \emph{unique} value of $k\in\Fq$ such that $x^7+kx^3-1=0$. Thus, there are at most $q-1$ values of $k\in\Fq$ such that the polynomial $x^7 + kx^3 - 1$ has a zero in $\Fq$.
\end{proof}

The next result improves Proposition~\ref{prop:existence-good-k}. 

\begin{thm}\label{thm:good-k} There exist at least $\frac{q}{6}-1-\frac{28}{3}\sqrt{q}$ many values of $k\in\mathbb{F}_q$ such that $x^7 + kx^3 -1 \in\mathbb{F}_q[x]$ has no zeros in $\mathbb{F}_q$.
\end{thm}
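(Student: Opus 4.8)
The plan is to reinterpret the divisibility condition and reduce it to a point count on an auxiliary curve. A value $k\in\mathbb{F}_q$ is \emph{bad}, i.e.\ $x^7+kx^3-1$ has a zero in $\mathbb{F}_q$, precisely when $k$ lies in the image of the map $\phi\colon\mathbb{F}_q^{\ast}\to\mathbb{F}_q$ given by $\phi(x)=(1-x^7)/x^3=x^{-3}-x^4$, because any zero $x_0$ is nonzero and forces $k=\phi(x_0)$. Hence the number of good $k$ equals $q-\#\phi(\mathbb{F}_q^{\ast})$, and it suffices to bound $\#\phi(\mathbb{F}_q^{\ast})$ from above by $\tfrac{5}{6}q+O(\sqrt q)$. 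Viewing $\phi$ as a degree-$7$ morphism $\mathbb{P}^1\to\mathbb{P}^1$, I would first record the elementary identity
$$q-\#\phi(\mathbb{F}_q^{\ast})\;=\;1+\sum_{k\in\mathbb{F}_q}\bigl(n_k-1\bigr)_{+},\qquad n_k:=\#\bigl(\phi^{-1}(k)\cap\mathbb{F}_q^{\ast}\bigr),$$
together with $\sum_k n_k=q-1$, so everything comes down to a lower bound for $\sum_k(n_k-1)_{+}$.

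To get at $\sum_k n_k(n_k-1)$ I would pass to the correspondence curve $Y\subset\mathbb{A}^2$ defined by $\phi(x_1)=\phi(x_2)$ with the diagonal removed; clearing denominators in $(1-x_1^7)x_2^3=(1-x_2^7)x_1^3$ and dividing by $x_1-x_2$ yields the explicit equation
$$x_1^3x_2^3(x_1+x_2)(x_1^2+x_2^2)\;+\;x_1^2+x_1x_2+x_2^2\;=\;0,$$
a plane curve of degree $9$. Counting $\mathbb{F}_q$-points fibrewise over the $k$-line gives $\#Y(\mathbb{F}_q)=\sum_k n_k(n_k-1)+O(1)$, the $O(1)$ coming from the node at the origin, the finitely many critical diagonal points $x_1=x_2$ with $x_1^7=-3/4$, and the points of $Y$ at infinity. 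I would then prove $Y$ is absolutely irreducible via monodromy: $\phi$ is ramified at a point of cycle type $(4,3)$ over $\infty$ (namely $x=\infty$ with index $4$ and $x=0$ with index $3$) and at $7$ distinct simple branch points coming from $\phi'(x)=-(4x^7+3)/x^4$. Thus the geometric monodromy group $G\le S_7$ is generated by transpositions and an element of type $(4,3)$; since $7$ is prime $G$ is primitive, the fourth power of the $(4,3)$-element is a $3$-cycle, so $G\supseteq A_7$ by Jordan's theorem, and a transposition forces $G=S_7$ (in characteristics $2,3,7$ one checks the local inertia separately, but $G\supseteq A_7$ still holds, since neither $\mathrm{AGL}(1,7)$ nor $\mathrm{PSL}(2,7)$ contains a fixed-point-free element of order divisible by $12$). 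As $S_7$ is $2$-transitive, $Y$ is absolutely irreducible. Being a degree-$9$ plane curve, its geometric genus is at most the arithmetic genus $\binom{8}{2}=28$, so Weil gives $\#Y(\mathbb{F}_q)\ge q+1-56\sqrt q$ and hence $\sum_k n_k(n_k-1)\ge q-c_0-56\sqrt q$ for an explicit constant $c_0$.

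The final step is the combinatorial conversion. Since every fibre has $n_k\le 7$, one has $(n_k-1)_{+}\ge\tfrac16 n_k(n_k-1)$ whenever $n_k\le 6$, which is exactly where the constant $\tfrac16$ in the target estimate — and correspondingly $\tfrac{28}{3}\sqrt q=\tfrac{56}{6}\sqrt q$ — originates. The only obstruction to this per-fibre inequality is $n_k=7$, i.e.\ the finitely-behaved event that $x^7+kx^3-1$ splits completely over $\mathbb{F}_q$; I would control the aggregate contribution of these fibres by bringing in one further moment, namely the analogous absolutely irreducible curve of ordered triples of distinct preimages (irreducible because $S_7$ is $3$-transitive), so that the loss from the $n_k=7$ fibres is absorbed into the additive constant. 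Combining the three ingredients then gives $q-\#\phi(\mathbb{F}_q^{\ast})\ge\tfrac16 q-1-\tfrac{28}{3}\sqrt q$.

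The step I expect to be the real obstacle is the combinatorial/moment bookkeeping: getting the \emph{clean} constant $\tfrac16$ while simultaneously taming the totally-split fibres, and making the exceptional-point counts and the genus estimate fully explicit (particularly in characteristics $2$, $3$, $7$), so that all the error is genuinely absorbed into the stated $-1$. The absolute-irreducibility input is conceptually routine once the ramification of $\phi$ is written down, but the precise quantitative matching of constants is the delicate part.
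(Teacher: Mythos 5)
Your proposal follows the same skeleton as the paper: parametrize the bad $k$ by $\phi(x)=(1-x^7)/x^3$, pass to the degree-$9$ correspondence curve $x_1^3x_2^3(x_1+x_2)(x_1^2+x_2^2)+x_1^2+x_1x_2+x_2^2=0$, apply a Weil-type bound with arithmetic genus $28$, and convert fibrewise with the factor $\tfrac16$. The first genuine gap is your irreducibility input. The paper proves absolute irreducibility of this degree-$9$ polynomial in \emph{every} characteristic by a short Eisenstein-style argument on the lowest/highest homogeneous parts (Lemma~\ref{lem:irreducible}); your monodromy computation works when $p\notin\{2,3,7\}$, but the ramification data you invoke simply does not exist otherwise: in characteristic $2$ and $3$ the derivative $-(4x^7+3)/x^4$ has no zeros at all (so there are no finite simple branch points, hence no transpositions from them), and the point over $\infty$ of index $4$ (char $2$) resp.\ $3$ (char $3$) is wildly ramified, so there is no tame inertia element of cycle type $(4,3)$; in characteristic $7$ the finite critical locus collapses to a single point of multiplicity $7$. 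Your parenthetical fallback presupposes the order-$12$ fixed-point-free element whose existence is exactly what is in doubt there, so characteristics $2,3,7$ (which include arbitrarily large $q$) are not covered by your argument.

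The second, more serious gap is the treatment of the totally split fibres $n_k=7$. You rightly identify them as the only obstruction to $(n_k-1)_+\ge\tfrac16 n_k(n_k-1)$, but their loss cannot be ``absorbed into the additive constant'': with the $S_7$ monodromy you yourself compute, the function-field Chebotarev theorem gives about $q/5040+O(\sqrt q)$ values of $k$ for which $x^7+kx^3-1$ has seven distinct roots in $\mathbb{F}_q$ --- a positive proportion of $q$, not $O(1)$. The third-moment curve does rescue the leading constant, e.g.\ via $(n-1)_+\ge\tfrac12 n(n-1)-\tfrac13 n(n-1)(n-2)$ for $0\le n\le 7$, which yields $\sum_k(n_k-1)_+\ge\tfrac12 M_2-\tfrac13 M_3\ge \tfrac q6-O(\sqrt q)$; but the $\sqrt q$ coefficient then involves the (much larger) genus of the triple curve, so this route does not produce the stated $\tfrac q6-1-\tfrac{28}{3}\sqrt q$, as you half-anticipate in your closing caveat. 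For comparison, the paper sidesteps the issue by asserting in its Claim 2 that every clique has size at most $6$, deduced from the vertex-degree bound $6$ --- a deduction that on its face only excludes cliques of size $\ge 8$ --- so you have in fact put your finger on the delicate point of the argument; but your proposed repair does not recover the theorem's constants, and hence the proposal as written does not prove the stated bound.
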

Note that Theorem~\ref{thm:q+3:Fq:points} and Proposition~\ref{prop:existence-good-k} together yields that for each odd $q$, there exists at least one value $k\in\Fq$ for which the corresponding curve $C_k$ has no singular $\Fq$-points. Furthermore, we expect that the curves in Theorem~\ref{thm:q+3:Fq:points} are smooth if and only if they are smooth at all their $\mathbb{F}_q$-points. Our main conjecture below restates this prediction. 

\begin{conj}\label{main:conjecture:q:odd} Suppose $q$ is odd. The plane-filling curve $C_{k}$ defined by~\eqref{eq:curve:q+3} is smooth if and only if the polynomial $x^7 + kx^3 -1$ has no zeros in $\mathbb{F}_q$.
\end{conj}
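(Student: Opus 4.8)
We describe the strategy we have in mind; the reductions below go through, but the final step remains open. One implication is immediate: if $x^7+kx^3-1$ has a zero in $\F_q$, then by Theorem~\ref{thm:q+3:Fq:points} the curve $C_k$ has a singular $\F_q$-point, hence is not smooth. For the converse, suppose $x^7+kx^3-1$ has no zero in $\F_q$; by Theorem~\ref{thm:q+3:Fq:points} the curve $C_k$ is smooth at every $\F_q$-point, so it suffices to show that $C_k$ has no singular point that fails to be $\F_q$-rational. In fact we would aim at the (unconditional, stronger) assertion that every singular point of $C_k$ is $\F_q$-rational, which combined with Theorem~\ref{thm:q+3:Fq:points} yields the conjecture.

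The first step is a line count. Restricting $F$ to an arbitrary $\F_q$-line $\ell$, the three binomials $x^qy-xy^q$, $y^qz-yz^q$, $z^qx-zx^q$ all become scalar multiples of a single one, and one checks directly that $F|_\ell$ factors as one of these binomials times a binary quadratic form whose coefficients cannot all vanish (this is where $q$ odd enters); hence $C_k$ contains no $\F_q$-line. Consequently, for every $\F_q$-line $\ell$ the scheme $C_k\cap\ell$ is an effective divisor of degree $q+3$ on $\ell\cong\mathbb{P}^1$ containing all $q+1$ of its $\F_q$-points. A singular point of $C_k$ lying on $\ell$ that is not $\F_q$-rational would be a closed point of degree $d\ge 2$ with intersection multiplicity $\ge 2$, contributing at least $2d\ge 4$ to the degree beyond the $q+1$ already accounted for --- impossible. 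Thus no $\F_q$-line passes through a non-$\F_q$-rational singular point, which forces the homogeneous coordinates of such a point to be linearly independent over $\F_q$; in particular it avoids the three coordinate lines (which is in any case immediate from $F_x,F_y,F_z$) and its residue field has degree $\ge 3$ over $\F_q$.

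The second step is elimination. Work in the chart $z=1$ and write a singular point as $(a,b)$ with $a,b\in\overline{\F_q}^{\times}$. Since $q$ is a power of $\char\F_q$, the partials of $x^q,y^q,z^q$ vanish, so $F_x=0$, $F_y=0$, $F_z=0$ (and, when $\char\F_q=3$, also $F=0$, as Euler's identity is then vacuous) are polynomial equations in $a,b,a^q,b^q$ that are linear in $a^q$ and in $b^q$. Eliminating $a^q$ via one equation and $b^q$ via the other two yields a single polynomial identity $\Phi_k(a,b)=0$ over $\F_q$ whose degree is bounded independently of $q$ (equal to $7$ when $\char\F_q\neq 3$, with top-degree form $6a^2b^2(b^3-ka^3)$; degree $6$ when $\char\F_q=3$), together with explicit rational expressions $a^q=\sigma_k(a,b)$ and $b^q=\rho_k(a,b)$. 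So every singular point lies on the \emph{fixed} plane curve $\{\Phi_k=0\}$ and satisfies the Frobenius-compatibility relations $a^q=\sigma_k(a,b)$ and $b^q=\rho_k(a,b)$; note that $\{\Phi_k=0\}$ already contains the finitely many $\F_q$-rational singular points governed by Theorem~\ref{thm:q+3:Fq:points}.

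The remaining step --- which is the main obstacle --- is to prove that every point of $\{\Phi_k=0\}$ satisfying $a^q=\sigma_k(a,b)$ and $b^q=\rho_k(a,b)$ has $a,b\in\F_q$. After clearing denominators the two Frobenius graphs have degree $\approx q$, so Bézout only bounds the relevant intersection by $O(q)$ points, and we see no structural reason forcing it to avoid points of degree $\ge 3$. Pushing the argument through would seem to require combining the $\F_q$-linear-independence constraint from the line count with a finer study of $\Phi_k$ (its irreducibility, its singular locus, and how it meets the coordinate lines and the locus $b^3=ka^3$), or exploiting the self-consistency of the system --- which makes a non-$\F_q$-rational singular point a periodic point of the rational self-map $(x,y)\mapsto(\sigma_k(x,y),\rho_k(x,y))$ constrained to lie on $\{\Phi_k=0\}$ --- or else finding a bound on $\#\operatorname{Sing}(C_k)$ much sharper than the (far too weak) arithmetic-genus bound. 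We have been unable to carry out any of these, which is why the statement is posed as a conjecture; the case $\char\F_q=3$, where the coefficients $3$ in $F$ drop out and both $\Phi_k$ and the auxiliary relations simplify, looks like the natural place to begin.
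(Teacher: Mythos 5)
You have not proved the statement, and you say so yourself; the honest assessment is that the missing step is the entire crux. Note that the statement is posed in the paper as Conjecture~\ref{main:conjecture:q:odd}: the authors do not prove it either, offering only Macaulay2 verification for odd $q<200$ together with the weaker Theorem~\ref{thm:q+3:Fq^2:points}. The parts of your argument that do go through are correct and essentially reproduce that partial progress. The forward implication is immediate from Theorem~\ref{thm:q+3:Fq:points}. Your line-count step --- each of the three binomials restricts on an $\F_q$-line to a scalar multiple of a single binomial, the residual quadratic is not identically zero when $q$ is odd, and a non-rational singular closed point of degree $d$ on an $\F_q$-line would force $q+1+2d\le q+3$ --- is precisely the mechanism of Propositions~\ref{prop:criterion-Fq^2-points} and~\ref{prop:Fq-components}, and it yields what the paper gets: any singular point of $C_k$ (for good $k$) has degree at least $3$ over $\F_q$ and lies on no $\F_q$-line.

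The genuine gap is that nothing in your proposal excludes singular closed points of degree $\ge 3$, and this is exactly where the known techniques stop. Your elimination scheme (a bounded-degree curve $\Phi_k=0$ plus Frobenius-compatibility relations $a^q=\sigma_k(a,b)$, $b^q=\rho_k(a,b)$) only confines the candidate singular points to an intersection of size $O(q)$ by B\'ezout, with no structural reason forcing such points to be $\F_q$-rational; indeed the paper's higher-degree examples (e.g.\ $C_{9,5}$ over $\F_{11}$, smooth at all $\F_{11}$-points yet singular at a conjugate pair of $\F_{11^2}$-points, and $C_{1,7}$ over $\F_5$) show that smoothness at low-degree points does not in general propagate, so any proof must use features specific to $r=2$ beyond the line count. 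In short: your reductions are sound and coincide with the paper's route to Theorem~\ref{thm:q+3:Fq^2:points}, but the conjecture itself remains open both in your write-up and in the paper, so the proposal cannot be accepted as a proof of the stated equivalence.
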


We have verified Conjecture~\ref{main:conjecture:q:odd} using Macaulay2 \cite{M2} for all odd prime powers $q<200$. When $q=2^m$ is even, the curve $C_k$ defined by \eqref{eq:curve:q+3} turns out to be singular (for \emph{every} $k\in \mathbb{F}_q$). As a replacement, we consider another curve $D_k$ in this case:
\begin{equation}\label{eq:curve:q+3:even:q}
x^2 (x^q y - x y^q) + y^2 (y^q z - y z^q) + (z^2 + k xy)(z^q x - z x^q) = 0.
\end{equation}

We make a similar conjecture regarding the smoothness of the curves $D_k$.

\begin{conj}\label{main:conjecture:q:even} Suppose $q$ is even. The plane-filling curve $D_{k}$ defined by~\eqref{eq:curve:q+3:even:q} is smooth if and only if the polynomial $x^7 + kx^5 +1 $ has no zeros in $\mathbb{F}_q$.
\end{conj}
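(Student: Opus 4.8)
The plan is to extend the argument behind Theorem~\ref{thm:q+3:Fq:points} from $\Fq$-points to all of $\bP^2$, using crucially that $q$ is even. Write $F$ for the left-hand side of \eqref{eq:curve:q+3:even:q} and set $B_1=x^qy-xy^q$, $B_2=y^qz-yz^q$, $B_3=z^qx-zx^q$, so $F=x^2B_1+y^2B_2+(z^2+kxy)B_3$. Since $2=0$ and $q\cdot t^{q-1}=0$ for all $t$, differentiating $F$ and simplifying gives the clean identities
\[
F_x=kyz\cdot x^q+x^2\cdot y^q+z^2\cdot z^q,\quad F_y=(x^2+kxz)\cdot x^q+(y^2+kx^2)\cdot z^q,\quad F_z=(z^2+kxy)\cdot x^q+y^2\cdot y^q .
\]
Hence for every point $P=[x:y:z]$, the vanishing $\nabla F(P)=0$ is equivalent to $M(P)\cdot(x^q,y^q,z^q)^{\mathsf{T}}=0$, where
\[
M:=\begin{pmatrix} kyz & x^2 & z^2\\ x^2+kxz & 0 & y^2+kx^2\\ z^2+kxy & y^2 & 0\end{pmatrix}.
\]
Since $(x^q,y^q,z^q)\ne 0$, every singular point of $D_k$ lies on $\{\det M=0\}$; and by Euler's identity $xF_x+yF_y+zF_z=(q+3)F=F$, the condition $F(P)=0$ is automatic and need not be tracked separately.

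First I would dispose of the $\Fq$-points, mirroring Theorem~\ref{thm:q+3:Fq:points}. At $P=[a:b:c]\in\bP^2(\Fq)$ one has $B_i(P)=0$, so $\nabla F(P)=a^{2}\,\nabla B_1|_P+b^{2}\,\nabla B_2|_P+(c^{2}+kab)\,\nabla B_3|_P$; substituting the elementary gradients of the three binomials at an $\Fq$-point and unwinding the coordinate equations shows that $a=0$ forces $P$ to be the zero vector, while for $a\ne 0$ (normalise $a=1$) the system reduces to $b\ne 0$, $c=b^{-2}$, and $b^{7}+kb^{5}+1=0$. Consequently $D_k$ has a singular $\Fq$-point if and only if $x^{7}+kx^{5}+1$ has a zero $b$ in $\Fq$, and then $[1:b:b^{-2}]$ is one; this already yields the ``only if'' half of the conjecture.

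The heart of the matter is to rule out singular points off $\bP^2(\Fq)$ whenever $x^{7}+kx^{5}+1$ has no $\Fq$-zero. A direct expansion produces the factorisation
\[
\det M \;=\; k\,(x^{3}+y^{2}z)\,(y^{3}+xz^{2}+kx^{2}y).
\]
Under the hypothesis, $k\ne 0$ (otherwise $x=1$ is a zero), so any non-$\Fq$ singular point $P$ lies on one of the two plane cubics $\Delta_1:\ x^{3}=y^{2}z$ or $\Delta_2:\ y^{3}=xz^{2}+kx^{2}y$. Each is a cuspidal — hence rational — cubic defined over $\Fq$, and admits an explicit $\Fq$-parametrisation by $\bP^1$: one has $\Delta_1=\{[t:1:t^{3}]\}\cup\{[0:0:1]\}$ and, with $\beta:=k^{1/4}\in\Fq$ and $s:=1+\beta r$, $\Delta_2=\{[r^{3}:rs^{2}:s]\}\cup\{[1:\sqrt k:0]\}$. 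For $t\notin\Fq$ one checks $\operatorname{rank}M([t:1:t^{3}])=2$ with kernel $\langle(1,\,t^{6}+kt,\,t^{2})\rangle$; demanding that the Frobenius vector $(t^{q},1,t^{3q})$ lie in this line gives $t^{3q}=t^{q+2}$, hence $t^{2q-2}=1$, hence $t^{q-1}=1$ because squaring is injective in characteristic $2$ — so $t\in\Fq$, a contradiction. The computation on $\Delta_2$ is identical in spirit: the kernel is $\langle(r^{2},\,s^{2},\,r^{3}(r^{3}+ks))\rangle$, the Frobenius condition forces $(r/s)^{q-1}=1$, and then the relation $s(1+\beta\cdot r/s)=1$ expresses $s$, and hence $r$, through the $\Fq$-element $r/s$, so again $P\in\bP^2(\Fq)$. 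The finitely many points bypassed above — the two cusps, and the locus on $\Delta_1\cup\Delta_2$ where $\operatorname{rank}M\le 1$ — are all $\Fq$-rational, hence covered by the previous paragraph. Combining the two steps gives the conjecture.

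The one genuinely non-routine step, which I expect to be the real obstacle, is the factorisation $\det M=k(x^{3}+y^{2}z)(y^{3}+xz^{2}+kx^{2}y)$ into two \emph{rational} cubics: it is special to the choice $Q_3=z^{2}+kxy$ in \eqref{eq:curve:q+3:even:q}, and for a generic admissible triple $(Q_1,Q_2,Q_3)$ the analogous determinant is an irreducible sextic, the two rational cubics vanish, and the ``Frobenius vector in the kernel'' argument has nothing to grip. Once the factorisation is secured the rest is bookkeeping — checking that the rank-drop locus and the two cusps are $\Fq$-rational, and verifying the smallest even $q$ in case a degenerate subcase misbehaves. Finally, it is worth noting that this route does not carry over to the odd-characteristic curve $C_k$ of Conjecture~\ref{main:conjecture:q:odd}: there the terms $2xB_1$, $2yB_2$, $2zB_3$, $2kxB_3$ survive, $\nabla F$ is no longer linear in $(x^q,y^q,z^q)$, and the clean matrix picture is lost — presumably the reason that case is more delicate.
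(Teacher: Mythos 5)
There is no proof of this statement in the paper to compare against: Conjecture~\ref{main:conjecture:q:even} is left open there, supported only by Macaulay2 verification for $q=2^m$, $m\le 9$ (together with the unproved-but-stated fact about smoothness at $\Fq$-points). Your proposal, by contrast, appears to be an actual proof, and I could not find a gap in its essential steps after checking them independently. In characteristic $2$ the gradient of $F$ really is $\F_q$-semilinear: I verified $F_x=kyz\,x^q+x^2y^q+z^2z^q$, $F_y=(x^2+kxz)x^q+(y^2+kx^2)z^q$, $F_z=(z^2+kxy)x^q+y^2y^q$, so singularities are governed by $M(P)\cdot(x^q,y^q,z^q)^{\mathsf T}=0$ together with Euler's relation $(q+3)F=F$. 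The determinant identity $\det M=k(x^3+y^2z)(y^3+xz^2+kx^2y)$ checks out, as do the $\Fq$-point analysis (recovering exactly the root condition $b^7+kb^5+1=0$, matching the paper's assertion after the conjecture), the parametrizations of the two rational cubics, the kernel vectors $(1,t^6+kt,t^2)$ and $(r^2,s^2,r^3(r^3+ks))$, and the Frobenius-proportionality descent $t^{q-1}=1$, respectively $(r/s)^{q-1}=1$ followed by $s(1+\beta r/s)=1$.

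A few points of bookkeeping should be written out to make this airtight, all of which I checked do work. First, one needs $\operatorname{rank}M\ge 2$ at \emph{every} point of $\bP^2$ when $k\ne 0$ (so that the kernel is the stated line wherever $\det M=0$); a short case analysis on whether $y\ne 0$, $y=0\ne x$, or $x=y=0$ shows the rank-$\le 1$ locus is in fact empty, which is cleaner than your claim that it is $\Fq$-rational. Second, surjectivity of the $\Delta_2$-parametrization needs a one-line argument: the homogenized map $[r:u]\mapsto[r^3:r(u+\beta r)^2:u^2(u+\beta r)]$ is base-point free with linearly independent coordinate forms, so its image is a closed irreducible cubic contained in $\Delta_2$, forcing equality. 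Third, the points excluded by the divisions ($t=0$ on $\Delta_1$; $r=0$ or $s=0$ on $\Delta_2$) and the two cusps are $[0\!:\!1\!:\!0]$, $[0\!:\!0\!:\!1]$, $[1\!:\!0\!:\!0]$, $[1\!:\!\sqrt k\!:\!0]$, all $\Fq$-rational and hence covered by the $\Fq$-point step; and $k\ne 0$ is automatic under the hypothesis since $x=1$ is a root of $x^7+1$. With these details filled in, your argument settles the conjecture, and its mechanism (semilinearity of $\nabla F$ in characteristic $2$ plus the split determinant) also explains why the odd-characteristic Conjecture~\ref{main:conjecture:q:odd} does not yield to the same attack. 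You should write this up carefully and test it numerically against the paper's verified range before claiming the result.
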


The polynomial $x^7+kx^5+1$ featured above is prominent because one can show, similar to Theorem~\ref{thm:q+3:Fq:points}, that a plane-filling curve $D_k$ is smooth at all of its $\Fq$-points (when $q$ is even) if and only if $x^7+kx^5+1$ has no $\Fq$-roots. We have verified Conjecture~\ref{main:conjecture:q:even} using Macaulay2 \cite{M2} for $q=2^m$ when $1\leq m\leq 9$.

We prove the following as partial progress towards Conjecture~\ref{main:conjecture:q:odd}. 

\begin{thm}\label{thm:q+3:Fq^2:points} Suppose $q$ is odd. There exists a suitable choice of $k\in\mathbb{F}_q$ such that the plane-filling curve $C_{k}$ defined by by~\eqref{eq:curve:q+3} is smooth at all $\mathbb{F}_{q^2}$-points.
\end{thm}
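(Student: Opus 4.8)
The plan is to exploit the inclusion $\mathbb{P}^2(\Fq)\subseteq\mathbb{P}^2(\mathbb{F}_{q^2})$: if $C_k$ is smooth at every $\mathbb{F}_{q^2}$-point, it is in particular smooth at every $\Fq$-point, so by Theorem~\ref{thm:q+3:Fq:points} it suffices to work with the values of $k$ for which $x^7+kx^3-1$ has no $\Fq$-root — of which there are at least $\tfrac{q}{6}-1-\tfrac{28}{3}\sqrt{q}$ by Theorem~\ref{thm:good-k} — and to bound, by an absolute constant, how many of them can still make $C_k$ singular at some point of $\mathbb{P}^2(\mathbb{F}_{q^2})\setminus\mathbb{P}^2(\Fq)$. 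Write $F=x^2B_1+y^2B_2+(z^2+kx^2)B_3$ with $B_1=x^qy-xy^q$, $B_2=y^qz-yz^q$, $B_3=z^qx-zx^q$, and let $p$ be the characteristic of $\Fq$. Since $p\mid q$, the partial derivatives are
\[
\begin{aligned}
F_x&=2xB_1-x^2y^q+2kxB_3+(z^2+kx^2)z^q,\\
F_y&=x^{q+2}+2y^{q+1}z-3y^2z^q,\\
F_z&=y^{q+2}+2zB_3-(z^2+kx^2)x^q,
\end{aligned}
\]
and the key point is that $F_y$ does not involve $k$, while $F_x$ and $F_z$ have degree at most one in $k$; I will also use the syzygy $zB_1+xB_2+yB_3=0$ and Euler's identity $xF_x+yF_y+zF_z=(q+3)F$.

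Next I would pin down a hypothetical singular point. A short calculation shows that a singular point of $C_k$ on $\{x=0\}$ must be $[0:1:0]$ or $[0:0:1]$, both $\Fq$-rational, so any singular point $P\in\mathbb{P}^2(\mathbb{F}_{q^2})\setminus\mathbb{P}^2(\Fq)$ may be written $P=[1:b:c]$ with $(b,c)\in\mathbb{F}_{q^2}^2\setminus\Fq^2$. As $C_k$ and $k$ are defined over $\Fq$, the conjugate $P^{(q)}=[1:b^q:c^q]$ is also singular on $C_k$. Evaluating $F_z$ at $P$ yields $k=b^{q+2}+2c^{q+1}-3c^2$, and equating this with the expression coming from $F_z(P^{(q)})=0$ gives a Galois relation between $b$ and $c$; combining it with $F_y(P)=0$, $F_y(P^{(q)})=0$ and Euler's identity, one deduces (when $p\notin\{3,5\}$) that $b\notin\Fq$, $c\notin\Fq$, and that none of $B_1(P),B_2(P),B_3(P)$ vanishes — in each degenerate case the fact that $F_y$ is $k$-free, together with $k\in\Fq$, forces $P$ into $\mathbb{P}^2(\Fq)$.

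The heart of the matter is a finiteness statement. Setting $u:=b^q$ and $v:=c^q$, one has $u^q=b$, $v^q=c$, $k^q=k$ because $b,c\in\mathbb{F}_{q^2}$ and $k\in\Fq$; hence the $q$-power Frobenius conjugate of each of $F=F_x=F_y=F_z=0$ at $P$ and at $P^{(q)}$ turns into a polynomial equation in $(b,c,u,v,k)$ of degree bounded independently of $q$. I would then show that this over-determined system cuts out a zero-dimensional subscheme of $\mathbb{A}^5$ whose degree is at most some explicit $N$: one examines the positive-dimensional loci that could a priori occur — chiefly the diagonal $\{u=b,\ v=c\}$ (which carries the $\mathbb{P}^2(\Fq)$-singular points) and the loci $\{u=b\}$, $\{v=c\}$, $\{uc=bv\}$ corresponding to $B_1(P)=0$, $B_3(P)=0$, $B_2(P)=0$ — and shows, via the reductions above (re-run with the equation $F=0$ retained, which also takes care of $p=3$), that each meets the system in only finitely many points. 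It follows that at most $N$ values of $k\in\Fq$ can make $C_k$ singular at a point of $\mathbb{P}^2(\mathbb{F}_{q^2})\setminus\mathbb{P}^2(\Fq)$; since $\tfrac{q}{6}-1-\tfrac{28}{3}\sqrt{q}>N$ for $q$ past an explicit bound, Theorem~\ref{thm:good-k} then supplies an admissible $k$, and the remaining small $q$ are handled by the Macaulay2 computation mentioned in the paper.

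The decisive obstacle should be exactly this zero-dimensionality: the bounded-degree system \emph{must} use both the Frobenius-conjugate equations and the constraint $k\in\Fq$ — the three vanishing-partial equations by themselves define a one-parameter family inside $\{F_y=0\}$, so no argument using them alone can succeed — and one has to confirm that it has no spurious positive-dimensional component and to extract a constant $N$ (equivalently, a threshold on $q$) compatible with the computer-checked range. Disposing of the characteristics $p=3$ and $p=5$, where Euler's identity degenerates or a cancellation in the Galois step fails, is a secondary nuisance that should be absorbed by keeping $F=0$ in play and redoing the affected computations by hand.
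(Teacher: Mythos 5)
Your plan is not a proof: its central step --- that the bounded-degree system in $(b,c,u,v,k)$ obtained by Frobenius-substitution is zero-dimensional away from the degenerate loci, with degree bounded by an absolute constant $N$ --- is exactly what you flag as ``the decisive obstacle'' and then leave unestablished. Everything else (the formulas for $F_x,F_y,F_z$, the syzygy, the exclusion of singular points with $x=0$) is routine bookkeeping around that missing core, and the case analysis you sketch for $p\in\{3,5\}$ and for the loci $u=b$, $v=c$, $uc=bv$ is asserted, not carried out. There is also a concrete quantitative failure in your fallback: even if $N=0$, the inequality $\frac{q}{6}-1-\frac{28}{3}\sqrt q>N$ forces $q\gtrsim 3150$, while the Macaulay2 verification mentioned in the paper only covers odd $q<200$; so ``the remaining small $q$ are handled by the computation in the paper'' is false as written, and the argument would leave a large untreated range of $q$ even granting the finiteness claim.

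The idea you are missing is much softer and avoids counting $k$'s altogether. If $C_k$ is smooth at all $\mathbb{F}_q$-points (Theorem~\ref{thm:q+3:Fq:points} plus Proposition~\ref{prop:existence-good-k} give such a $k$) and is singular at some $Q\in\mathbb{P}^2(\mathbb{F}_{q^2})\setminus\mathbb{P}^2(\mathbb{F}_q)$, then the Frobenius conjugate $Q^{\sigma}\neq Q$ is also a singular point, and the line $L$ through $Q$ and $Q^{\sigma}$ is defined over $\mathbb{F}_q$. One checks directly (Proposition~\ref{prop:Fq-components}, using $q$ odd) that no $\mathbb{F}_q$-line is a component of $C_k$, so B\'ezout gives $\sum_{P\in L\cap C_k} I_P(L,C_k)=q+3$. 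But $L$ already contains the $q+1$ distinct $\mathbb{F}_q$-points of the plane-filling curve $C_k$ lying on $L$, and $Q,Q^{\sigma}$ each contribute intersection multiplicity at least $2$, giving at least $q+5$ --- a contradiction. This works uniformly for every odd $q$ and for every $k$ with $x^7+kx^3-1$ rootless in $\mathbb{F}_q$, with no large-$q$ threshold, no elimination theory, and no computer verification; it is the content of Proposition~\ref{prop:criterion-Fq^2-points}, and it is the step your proposal needs to replace its unproven finiteness claim.
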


A similar argument as the one employed in Theorem 1.7 yields an analogous
result when $q$ is even, and the curve $C_k$ is replaced by $D_k$.

To prove Theorem~\ref{thm:q+3:Fq^2:points}, we will prove that any plane-filling curve of degree $q+3$ which is smooth at $\mathbb{F}_q$-points and has no $\mathbb{F}_q$-linear component must be smooth at each of its $\mathbb{F}_{q^2}$-points. 

We also investigate plane-filling curves of degree $q+r+1$ where $r\geq 2$ is arbitrary. 

\begin{thm}\label{thm:higher-degree} For each $k\in\mathbb{F}_q$, consider the plane-filling curve $C_{k, r}$ defined by
$$
x^r (x^q y - x y^q) + y^r (y^q z - y z^q) + (z^r + k x^r)(z^q x - z x^q) = 0.
$$
Then $C_{k, r}$ is smooth at every $\mathbb{F}_q$-point of $\mathbb{P}^2$ if and only if the polynomial $x^{r^2+r+1}+kx^{r+1}-1=0$ has no zeros in $\mathbb{F}_q$. 
\end{thm}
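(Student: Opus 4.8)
The plan is to follow the proof of Theorem~\ref{thm:q+3:Fq:points}, which is exactly the case $r=2$. Write $F$ for the homogeneous polynomial of degree $q+r+1$ on the left-hand side of the displayed equation defining $C_{k,r}$. Since each of the binomials $x^qy-xy^q$, $y^qz-yz^q$, $z^qx-zx^q$ vanishes at every $\Fq$-point of $\mathbb{P}^2$, so does $F$; hence by the Jacobian criterion the curve $C_{k,r}$ is smooth at a point $P\in\mathbb{P}^2(\Fq)$ if and only if $(F_x(P),F_y(P),F_z(P))\neq(0,0,0)$. So the first step is to expand $F$, differentiate, and evaluate at $P=[a:b:c]\in\mathbb{P}^2(\Fq)$, using the Frobenius relations $a^q=a$, $b^q=b$, $c^q=c$ together with $p\mid q$ (so that $\partial_x(x^q)=qx^{q-1}=0$ and the integer $q+r$ reduces to $r$ modulo $p$). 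A direct computation gives
\begin{align*}
F_x(P)&=-a^r b+c^{r+1}+ka^r c,\\
F_y(P)&=a^{r+1}-b^r c,\\
F_z(P)&=b^{r+1}-ac^r-ka^{r+1}.
\end{align*}

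Next I would solve the system $F_x(P)=F_y(P)=F_z(P)=0$ over $\mathbb{P}^2(\Fq)$. A short case analysis on vanishing coordinates shows that a singular $\Fq$-point must have $abc\neq 0$: if $a=0$, then $F_y(P)=0$ forces $b=0$ or $c=0$, and then $F_x(P)=0$ (respectively $F_z(P)=0$) fails because $c\neq 0$ (respectively $b\neq 0$); if $a\neq0$ and $b=0$, then $F_y(P)=0$ fails; and if $a,b\neq0$ and $c=0$, then $F_x(P)=0$ fails. So we may normalize $a=1$. The three equations then read $b=c(c^r+k)$, $b^r c=1$, and $b^{r+1}=c^r+k$, and since $b,c\neq0$ any two of them imply the third. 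Using $b^r c=1$ to substitute $c=b^{-r}$ into $b^{r+1}=c^r+k$ and clearing denominators yields
\[
b^{r^2+r+1}-kb^{r^2}-1=0;
\]
conversely, for each root $b\in\Fq^{\ast}$ of this polynomial, the point $[1:b:b^{-r}]$ satisfies all three equations. Hence $C_{k,r}$ has a singular $\Fq$-point if and only if $h(x):=x^{r^2+r+1}-kx^{r^2}-1$ has a zero in $\Fq$ (equivalently in $\Fq^{\ast}$, since $h(0)=-1$).

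The last step is to match $h$ with the polynomial in the statement via the substitution $x\mapsto 1/x$: one checks directly that $x^{r^2+r+1}\,h(1/x)=-(x^{r^2+r+1}+kx^{r+1}-1)$, so $b\in\Fq^{\ast}$ is a zero of $h$ exactly when $1/b$ is a zero of $x^{r^2+r+1}+kx^{r+1}-1$. Since the latter polynomial likewise has nonzero constant term, it has a zero in $\Fq$ if and only if it has one in $\Fq^{\ast}$. Combining the three steps, $C_{k,r}$ is smooth at every $\Fq$-point of $\mathbb{P}^2$ if and only if $x^{r^2+r+1}+kx^{r+1}-1$ has no zero in $\Fq$, as claimed.

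I expect no serious obstacle, since the argument is the $r=2$ proof carried out for general $r$. The only points that require care are the bookkeeping of binomial-type coefficients modulo $p$ in the derivative computation (so that the three evaluated partials take the clean form above) and tracking which of the coordinates $a,b,c$ may vanish. As a sanity check, the same computation with $r=1$ reproduces the known degree-$(q+2)$ picture of Homma and Kim.
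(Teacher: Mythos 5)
Your proposal is correct and follows essentially the same route as the paper: evaluate the three partial derivatives at an $\mathbb{F}_q$-point (using Frobenius and the fact that the plane-filling property supplies $F(P)=0$ even when $p$ divides the degree), rule out singular points with a vanishing coordinate, normalize, and reduce to a single polynomial condition. The only cosmetic difference is that you normalize the first coordinate to $1$ rather than the second as in the paper, which is why you need the harmless reciprocal substitution $x\mapsto 1/x$ at the end to recover $x^{r^2+r+1}+kx^{r+1}-1$.
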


\subsection*{Structure of the paper} In Section~\ref{sect:q+3}, we prove  Theorem~\ref{thm:good-k}. We devote Section~\ref{sect:Fq^2-points} to Theorem~\ref{thm:q+3:Fq^2:points}, and Section~\ref{sect:higher-degree} to Theorem~\ref{thm:higher-degree}.

\section{Proof of Theorem~\ref{thm:good-k}}\label{sect:q+3}

We begin this section by noting that Theorem~\ref{thm:q+3:Fq:points} is a special case of Theorem~\ref{thm:higher-degree} which will be proven in Section~\ref{sect:higher-degree}. Our Theorem~\ref{thm:q+3:Fq:points} provides a criterion that tests whether the plane-filling curve $C_k$ defined by \eqref{eq:curve:q+3} is smooth at every $\F_q$-point.

The following technical result will be employed in our proof of Theorem~\ref{thm:good-k}.
\begin{lem}\label{lem:irreducible} The polynomial $x^3 y^3 (x+y)(x^2+y^2) + (x^2+xy+y^2)$ is irreducible in $\overline{\mathbb{F}_{q}}[x,y]$.
\end{lem}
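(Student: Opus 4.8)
The plan is to read off irreducibility from the Newton polygon of the polynomial $F(x,y) := x^3y^3(x+y)(x^2+y^2) + (x^2+xy+y^2)$, using the standard fact (due to Ostrowski) that the Newton polytope of a product of polynomials is the Minkowski sum of the Newton polytopes of the factors. First I would expand $F = x^6y^3 + x^5y^4 + x^4y^5 + x^3y^6 + x^2 + xy + y^2$ and note that its two nonzero homogeneous components are the top part $F_9 := x^3y^3(x+y)(x^2+y^2)$ of degree $9$ and the bottom part $F_2 := x^2+xy+y^2$ of degree $2$, so that the monomials of $F$ sit at $(6,3),(5,4),(4,5),(3,6)$ and at $(2,0),(1,1),(0,2)$. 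A direct computation then shows that the Newton polygon $N(F)$ is the lattice quadrilateral with vertices $(2,0)$, $(6,3)$, $(3,6)$, $(0,2)$, whose four edges have primitive direction vectors $(4,3)$, $(-1,1)$, $(-3,-4)$, $(1,-1)$ and lattice lengths $1$, $3$, $1$, $2$ respectively.

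Two elementary inputs feed into the argument. The first is that $F$ has no monomial factor, since $F(x,0) = x^2$ and $F(0,y) = y^2$ are nonzero, so neither $x$ nor $y$ divides $F$. The second is that $\gcd(F_9, F_2) = 1$ in $\overline{\mathbb{F}_q}[x,y]$; I would check this by listing the linear factors of $F_9$ — namely $x$, $y$, $x+y$, together with the linear factors of $x^2+y^2$ — and verifying that none of them divides $x^2+xy+y^2$. This case check must be run in every characteristic, in particular $2$ and $3$, where $F_2$ or $x^2+y^2$ degenerates to a perfect square, but it goes through uniformly. From the second input it follows that $F$ has no nonconstant homogeneous factor, since a homogeneous divisor of $F$ divides each homogeneous component of $F$ and hence divides $\gcd(F_9,F_2)=1$.

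With this in hand, suppose toward a contradiction that $F = GH$ with $G$, $H$ nonconstant. Then $N(F) = N(G) + N(H)$, and by the first input neither summand is a single point. The edge of $N(F)$ in direction $(4,3)$ has lattice length $1$, so it is contributed entirely by one summand, say $N(G)$; the same holds for the edge in direction $(-3,-4)$. If that edge were contributed by $N(H)$, then $N(G)$ would have an edge in direction $(4,3)$ and none in direction $(-3,-4)$, so all of its remaining edges would point in direction $(-1,1)$ or $(1,-1)$ — but then the edge vectors of $N(G)$ could not sum to zero, since the $(4,3)$-contribution cannot be cancelled by contributions along $(\pm 1, \mp 1)$. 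Hence the $(-3,-4)$-edge is also contributed by $N(G)$, which leaves $N(H)$ with edges only in the two opposite directions $(-1,1)$ and $(1,-1)$; thus $N(H)$ is a segment in direction $(1,-1)$. But then every monomial of $H$ has the same total degree, so $H$ is homogeneous, and nonconstant because $N(H)$ is not a point — contradicting the second input. Therefore $F$ is irreducible.

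The crux is the step just described: $N(F)$ is \emph{not} Minkowski-indecomposable (it splits, for instance, as a unit segment plus a triangle), so one cannot simply quote an indecomposability criterion. The real content is the combinatorial fact that every Minkowski decomposition of $N(F)$ must split off a segment in the $(1,-1)$ direction, which the coprimality of the top and bottom forms of $F$ then forbids; a minor but genuine point of care is to carry out the $\gcd(F_9,F_2)=1$ verification uniformly over all characteristics.
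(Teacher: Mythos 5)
Your argument is correct in substance, but it takes a genuinely different route from the paper. The paper proceeds in the style of Eisenstein's criterion: writing $f=gh$ and expanding both factors into homogeneous components, it selects a linear form dividing the bottom component $x^2+xy+y^2$ to multiplicity exactly one (when the characteristic is not $3$), propagates that linear form inductively through the vanishing middle components up to the top component, and contradicts the coprimality of $x^2+xy+y^2$ and $x^3y^3(x+y)(x^2+y^2)$; characteristic $3$ is then handled symmetrically from the top using the simple factor $x+y$. You package the same data into the Newton polygon: by Ostrowski's theorem a factorization yields a Minkowski decomposition of the quadrilateral with vertices $(2,0),(6,3),(3,6),(0,2)$, and your combinatorial analysis shows any such decomposition must split off a segment in direction $(1,-1)$, i.e.\ a nonconstant homogeneous factor, which the coprimality of the extreme forms rules out. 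What your route buys is uniformity: the combinatorial step requires no case split on the characteristic (only the routine $\gcd(F_9,F_2)=1$ check does), and it does not need either extreme form to have a multiplicity-one linear factor, which is exactly why the paper must treat $p=3$ separately. What the paper's route buys is that it is entirely elementary, using nothing beyond comparing homogeneous components.

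One step you should tighten: in arguing that the two lattice-length-one edges lie in the same summand, the constraint that edges of a summand are \emph{parallel} to edges of $N(F)$ is not quite enough, because a summand that is a segment in direction $(4,3)$ has its reversed edge $(-4,-3)$ parallel to $(4,3)$, and then its two edge vectors do sum to zero, so your cancellation argument as stated does not exclude it. The correct bookkeeping is via outer normals: if a summand has a one-dimensional face with outer normal $w$, then so does $N(F)$, whose edge normals are only $(3,-4)$, $(1,1)$, $(-4,3)$, $(-1,-1)$. Since $(-3,4)$ is absent, no summand can be a segment in direction $(4,3)$, and for a two-dimensional summand your argument (dotting the edge-vector relation with $(1,1)$ to get $7a=0$) goes through as written. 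With that adjustment, and the characteristic-by-characteristic verification of $\gcd(F_9,F_2)=1$ that you indicate, the proof is complete.
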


\begin{proof} The proof employs a technique seen in Eisenstein's criterion. First, suppose $p=\operatorname{char}(\mathbb{F}_q)\neq 3$. Assume, to the contrary, that $f(x, y)\colonequals x^3 y^3 (x+y)(x^2+y^2) + (x^2+xy+y^2)$ is reducible over the algebraic closure $\overline{\mathbb{F}_{q}}$. Write $f(x, y) = g(x, y)\cdot h(x, y)$, and express
\begin{align*}
g(x, y) &= g_m(x, y) + g_{m+1}(x,y) + \cdots + g_s(x, y) \\  
h(x, y) &= h_n(x, y) +  h_{n+1}(x, y) + \cdots + h_t(x,y) 
\end{align*}
where $g_i(x, y)$ and $h_j(x, y)$ are homogeneous of degree $i$ and $j$, respectively, for $m\leq i\leq s$ and $n\leq j\leq t$. From $f(x, y) = g(x, y) \cdot h(x, y)$, we see that 
$$
\begin{cases}
g_m h_n = x^2 + xy + y^2 \\ 
g_s h_t = x^3 y^3(x+y)(x^2+y^2) \\
\sum_{i+j = k} h_i g_j = 0 \text{ for } 2 < k < 9 
\end{cases}
$$
Since the characteristic $p\neq 3$, the polynomial $x^2+xy+y^2$ factors into distinct linear factors in $\overline{\mathbb{F}_q}[x, y]$. Let $x+\lambda y$ be one of those linear factors with $\lambda\in\overline{\mathbb{F}_q}$. Then $x^2+xy+y^2$ is divisible by $x+\lambda y$ but not by $(x+\lambda y)^2$. Thus, exactly one of $g_m$ or $h_n$ is divisible by $x+\lambda y$. Without loss of generality, assume $x+\lambda y$ divides $g_m$, and not $h_n$. Then using $\sum_{i+j = k} h_i g_j = 0$  for  $2 < k < 9$, we inductively see that $x+\lambda y$ divides $g_j$ for each $m\leq j\leq s$. In particular, $x+\lambda y$ divides $g_s h_t$. This is a contradiction because $x+\lambda y$ does not divide $x^3 y^3(x+y)(x^2+y^2)$. Indeed, $x^2+xy+y^2$ and $x^3 y^3(x+y)(x^2+y^2)$ are relatively prime. 

When $p=3$, a similar argument works from the other end of the polynomial: the leading term $x^3 y^3(x+y)(x^2+y^2)$ is divisible by $x+y$ but not by $(x+y)^2$. We deduce that $f(x, y)$ is irreducible over $\overline{\mathbb{F}_q}$ for every prime power $q$.
\end{proof}

\begin{proof}[Proof of Theorem~\ref{thm:good-k}] Our goal is to give a lower bound on the number of $k\in\mathbb{F}_q$ such that the polynomial $x^7 + kx^3 - 1$ has no roots in $\mathbb{F}_q$. As $x$ ranges in $\mathbb{F}_q^\ast$ (note that there is no $k\in\Fq$ for which $x=0$ would be a root of $x^7+kx^3-1$), the number of ``bad'' choices of $k$ are parametrized by $\frac{1-x^7}{x^3}$. We will show that there are many choices of $x$ and $y$ such that $\frac{1-x^7}{x^3}$ and $\frac{1-y^7}{y^3}$ give rise to the same value of $k$. Setting these expressions equal to each other, we obtain the following.
$$
\frac{1-x^7}{x^3} = \frac{1-y^7}{y^3} \ \ \Rightarrow \ \ 
x^7y^3 - y^3 = y^7x^3 - x^3
$$
After rearranging and dividing both sides by $x-y$, we obtain an affine curve $\mathcal{C}\subset \mathbb{A}^2$ defined by
$$
x^3 y^3 (x+y)(x^2+y^2)+x^2+xy+y^2 = 0,
$$
for $x,y\in\Fq^\ast$ \emph{and} $x\ne y$.  
Let $G$ be a graph whose vertex set is $\mathbb{F}_q^\ast$, and there is an edge between $x$ and $y$ if $(x, y)$ lies on the affine curve $\mathcal{C}$. We consider undirected edges, so the pairs $(x, y)$ and $(y, x)$ correspond to the same edge.

\textbf{Claim 1.} The number of edges of $G$ is at least $\frac{q}{2} -6- 28\sqrt{q}$.

Let $\tilde{\mathcal{C}}\subset\mathbb{P}^2$ be the projectivization of $\mathcal{C}$. By Lemma~\ref{lem:irreducible}, the curve $\tilde{\mathcal{C}}$ is geometrically irreducible. By Hasse-Weil inequality for geometrically irreducible curves \cite{AP96}*{Corollary 2.5}, $\# \tilde{\mathcal{C}}(\mathbb{F}_q) \geq q+1 - 56\sqrt{q}$. Since the line at infinity $z=0$ can contain at most $5$ distinct $\F_q$-points, we have $\# C(\mathbb{F}_q) \geq q - 4-56\sqrt{q}$; furthermore, we exclude the points for which $xy=0$ and there is only one such point $[0:0:1]\in\tilde{\mathcal{C}}$.  We also need to rule out the points on the diagonal, namely $x=y$; in this case, $4x^9+3x^2=0$ which contributes at most $7$ additional points with $x\neq 0$.  Thus, the number of $(x, y)\in C(\mathbb{F}_q)$ with $x\neq y$ is at least $q-12-56\sqrt{q}$. The claim follows since the edges are undirected.

\textbf{Claim 2.} Every connected component of $G$ is a complete graph $K_n$ where $n\in \{1, 2, 3, 4, 5, 6\}$.

If $(x, y)$ and $(x, z)$ are both edges of $G$, then $\frac{1-x^7}{x^3}=\frac{1-y^7}{y^3}$ and $\frac{1-x^7}{x^3}=\frac{1-z^7}{z^3}$. Consequently, $\frac{1-y^7}{y^3}=\frac{1-z^7}{z^3}$ and $(y, z)$ lies on the curve $\mathcal{C}$, so $(y, z)$ is an edge in $G$ too. Thus, each connected component of $G$ is a clique. In addition, from the equation of $\mathcal{C}$, the degree of each vertex $x\in G$ is at most $6$.

For each $1\leq i\leq 6$, let $m_i$ denote the number of cliques of size $i$ in $G$. Counting the number of edges in $G$ leads to the following equality. 
$$
\# E(G) = \sum_{i=1}^{6} \frac{i(i-1)}{2} \cdot m_i.
$$
Each clique of size $i$ in $G$ increases the number of ``good'' values of $k$ by an additive factor of $i-1$ because each clique corresponds to one ``bad'' value of $k$, i.e., a value $k\in\Fq$ for which the equation $x^7+kx^3-1=0$ is solvable for some $x\in\Fq$. More precisely,
\begin{align*}
& \# \{ k \in\mathbb{F}_q \ | \ x^7 + k x^3 - 1 \text{ has no zeros in } \Fq \} \\
& =q - \sum_{i=1}^6 m_i\\
& =1+ (q-1)-\sum_{i=1}^6 m_i\\
& =1+ \sum_{i=1}^6 i\cdot m_i - \sum_{i=1}^6 m_i\\
& = 1+\sum_{i=1}^{6} (i-1) \cdot m_i \\ 
& \geq 1+\frac{1}{3} \sum_{i=1}^{6} \frac{(i-1)i}{2} \cdot m_i \geq 1+\frac{1}{3} \# E(G) \geq 1+\frac{1}{3}\left(\frac{q}{2}-6 - 28\sqrt{q}\right)
\end{align*}
as desired. \end{proof}

\section{Smoothness at $\mathbb{F}_{q^2}$-points} \label{sect:Fq^2-points}

In this section, we show that a plane-filling curve $C$ of degree $q+3$ has the following special property: being smooth at $\mathbb{F}_q$-points implies being smooth at $\mathbb{F}_{q^2}$-points under a mild condition.

\begin{prop}\label{prop:criterion-Fq^2-points} Suppose $C$ is a plane-filling curve of degree $q+3$ such that
\begin{enumerate}[(i)]
    \item \label{condition:smoothness-Fq-points} The curve $C$ is smooth at all the $\mathbb{F}_q$-points.
    \item \label{condition:Fq-linear-components} The curve $C$ has no $\mathbb{F}_q$-linear component.
\end{enumerate}
Then $C$ is smooth at each $\mathbb{F}_{q^2}$-point.
\end{prop}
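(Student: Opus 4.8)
The plan is to deduce smoothness at an arbitrary $\mathbb{F}_{q^2}$-point from a tight Bézout estimate against a carefully chosen $\mathbb{F}_q$-rational line. Let $P\in C(\mathbb{F}_{q^2})$; if $P\notin C$ there is nothing to prove, and if $P\in\mathbb{P}^2(\mathbb{F}_q)$ then smoothness at $P$ is exactly hypothesis (i). So I would assume $P$ lies on $C$ but is not defined over $\mathbb{F}_q$. Let $\sigma$ generate $\operatorname{Gal}(\mathbb{F}_{q^2}/\mathbb{F}_q)$ and set $\bar P\colonequals\sigma(P)$. Since $C$ is defined over $\mathbb{F}_q$, the point $\bar P$ also lies on $C$; since $P\notin\mathbb{P}^2(\mathbb{F}_q)$ we have $\bar P\ne P$, and neither $P$ nor $\bar P$ is an $\mathbb{F}_q$-point of $\mathbb{P}^2$. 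Let $L$ be the unique line joining $P$ and $\bar P$. Applying $\sigma$, the line $L^\sigma$ passes through $\bar P$ and $P$, hence $L^\sigma=L$, and therefore $L$ is defined over $\mathbb{F}_q$; in particular $L(\mathbb{F}_q)$ consists of exactly $q+1$ points.

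Next I would run the intersection count on $L\cong\mathbb{P}^1$. By hypothesis (ii) the $\mathbb{F}_q$-rational line $L$ is not a component of $C$, so $C\cap L$ is an effective divisor of degree $\deg C=q+3$ on $L$. On the other hand, since $C$ is plane-filling it contains all $q+1$ points of $L(\mathbb{F}_q)$, and it also contains $P$ and $\bar P$. These $q+3$ points are pairwise distinct: the $q+1$ rational points are distinct from one another, and $P$ and $\bar P$ are distinct from each other and from every $\mathbb{F}_q$-point of $L$. Each therefore contributes at least $1$ to the degree of $C\cap L$; as the total is exactly $q+3$, every local intersection number equals $1$. In particular $I_P(C,L)=1$.

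Finally, for any line $L$ through a point $P$ of a plane curve $C$ one has $I_P(C,L)\ge\operatorname{mult}_P(C)$, and $\operatorname{mult}_P(C)=1$ if and only if $C$ is smooth at $P$; hence $I_P(C,L)=1$ forces $C$ to be smooth at $P$, which completes the argument. I do not anticipate a serious obstacle here: once the right line is produced, the proof is essentially a one-line Bézout computation. The only points needing care are (a) verifying that the join of a Frobenius-conjugate pair of points is $\mathbb{F}_q$-rational and not a component of $C$ — this is precisely where hypothesis (ii) is used — and (b) checking that $P$ and $\bar P$ are genuinely distinct from the $q+1$ rational points of $L$, so that the Bézout count is sharp. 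Hypothesis (i) enters only to settle the rational points $P\in\mathbb{P}^2(\mathbb{F}_q)$, to which the line argument does not directly apply.
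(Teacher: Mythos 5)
Your proposal is correct and is essentially the paper's proof: both join the point to its Frobenius conjugate, observe the resulting line is $\mathbb{F}_q$-rational and (by hypothesis (ii)) not a component, and run the Bézout count $q+3$ against the $q+1$ rational points plus the conjugate pair. The only difference is presentational—you argue directly that every local intersection number is $1$, while the paper argues by contradiction that a singular conjugate pair would force intersection degree at least $q+5$.
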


\begin{proof} Assume, to the contrary, that $C$ is singular at some $\mathbb{F}_{q^2}$-point $Q$. Then $Q$ is not an $\mathbb{F}_q$-point due to the hypothesis~\eqref{condition:smoothness-Fq-points}. Let $Q^{\sigma}$ denote the Galois conjugate of $Q$ under the Frobenius automorphism. More explicitly, if $Q = [x:y:z] \in\mathbb{P}^2$, then $Q^{\sigma} = [x^q:y^q:z^q]$. Note that $Q^{\sigma}$ is also contained in $C$ (since $C$ is defined over $\mathbb{F}_q$). Moreover, $Q^{\sigma}$ is also a singular point of $C$.

Consider the line $L$ joining $Q$ and $Q^{\sigma}$, which is an $\mathbb{F}_q$-line by Galois theory. By hypothesis~\eqref{condition:Fq-linear-components}, the line $L$ must intersect $C$ in exactly $q+3$ points (counted with multiplicity). However, $L$ already contains $q+1$ distinct $\mathbb{F}_q$-points of $C$ (because $C$ is plane-filling), and passes through the two singular points $Q$ and $Q^{\sigma}$, each contributing intersection multiplicity at least $2$. Thus, the total intersection multiplicity between $L$ and $C$ is at least $(q+1) + 2 + 2 = q + 5$, a contradiction. 
\end{proof}

\begin{rem}\label{rem:criterion-Fq^2-points}
We can weaken the hypothesis of Proposition~\ref{prop:criterion-Fq^2-points} by replacing the condition $\deg(C)=q+3$ with $\deg(C)\leq q+4$. Indeed, the same proof works verbatim.
\end{rem}

Next, we show that the plane-filling curves $C_k$ of degree $q+3$ considered in equation ~\eqref{eq:curve:q+3} indeed satisfy condition \eqref{condition:Fq-linear-components} when $q$ is odd.

\begin{prop}\label{prop:Fq-components} The curve $C_k$ defined by ~\eqref{eq:curve:q+3} has no $\mathbb{F}_q$-linear components when $q$ is odd. 
\end{prop}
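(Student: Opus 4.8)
The plan is to argue by contradiction: assume that some line $L$ defined over $\mathbb{F}_q$ is a component of $C_k$, parametrize $L$, restrict the defining polynomial to $L$, and show the restriction cannot vanish identically. Write the inclusion $L\hookrightarrow\mathbb{P}^2$ as $[s:t]\mapsto[X:Y:Z]$ with $X,Y,Z$ linear forms in $s,t$ over $\mathbb{F}_q$, which is possible since $L$ is an $\mathbb{F}_q$-line. The first ingredient is an elementary identity: for $\mathbb{F}_q$-linear forms $U=\alpha s+\beta t$ and $V=\gamma s+\delta t$ one has $U^qV-UV^q=(\alpha\delta-\beta\gamma)(s^qt-st^q)$, because $U^q=U(s^q,t^q)$. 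Thus each of the three binomials $x^qy-xy^q$, $y^qz-yz^q$, $z^qx-zx^q$ restricts on $L$ to a scalar multiple of the single form $\beta_0:=s^qt-st^q$, and therefore the restriction of the defining polynomial $F$ of $C_k$ is
$$
F|_L=\bigl([XY]\,X^2+[YZ]\,Y^2+[ZX]\,(Z^2+kX^2)\bigr)\,\beta_0,
$$
where $[UV]$ denotes the determinant of the $2\times2$ coefficient matrix of the pair $(U,V)$.

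Next I would invoke the standard fact that $([YZ],[ZX],[XY])$ is a nonzero scalar multiple of the coefficient vector $(\ell_1,\ell_2,\ell_3)$ of the linear form cutting out $L$; equivalently $L=\{[YZ]\,x+[ZX]\,y+[XY]\,z=0\}$. Since $\beta_0\not\equiv0$, the hypothesis that $L$ is a component of $C_k$ becomes the identity of binary quadratic forms
$$
(\ell_3+k\ell_2)\,X^2+\ell_1\,Y^2+\ell_2\,Z^2\equiv0,
$$
where $(\ell_1,\ell_2,\ell_3)=([YZ],[ZX],[XY])\neq(0,0,0)$. The heart of the matter is the observation that when $q$ is odd and $X,Y,Z$ are nonzero and pairwise non-proportional, the squares $X^2,Y^2,Z^2$ are linearly independent in the three-dimensional space of binary quadratic forms: they are three distinct points on the smooth conic obtained as the image of the degree-two Veronese embedding $\mathbb{P}^1\hookrightarrow\mathbb{P}^2$, and a line meets such a conic in at most two points. (In characteristic $2$ the square of a linear form has no cross term, which forces $X^2,Y^2,Z^2$ to be dependent — this is consistent with $C_k$ being singular in that case.) Granting independence, the displayed identity forces $\ell_3+k\ell_2=\ell_1=\ell_2=0$, hence $(\ell_1,\ell_2,\ell_3)=0$, a contradiction.

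It remains to handle the degenerate configurations in which two of $X,Y,Z$ are proportional or one of them is identically zero. These are precisely the cases where $L$ is a coordinate line $\{x=0\}$, $\{y=0\}$, or $\{z=0\}$, or where exactly one of the determinants $[XY],[YZ],[ZX]$ vanishes; in every such case, because the image is a genuine line, the two of $X,Y,Z$ that remain are still nonzero and non-proportional, so their squares remain linearly independent. One then checks that in each configuration one summand of the quadratic $(\ell_3+k\ell_2)X^2+\ell_1Y^2+\ell_2Z^2$ either drops out or is absorbed into another, and independence of the surviving two squares again forces some $\ell_i$ to vanish in a way incompatible with the configuration. I expect this finite case-check — which must be carried out separately for each coordinate, since $C_k$ is not symmetric in $x,y,z$ — to be the only tedious part of the argument; the conceptual content, namely the reduction to a quadratic-form identity and the Veronese-conic observation, is short.
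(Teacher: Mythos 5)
Your argument is correct, but it takes a genuinely different route from the paper's. The paper proceeds by direct substitution: it splits the $\mathbb{F}_q$-lines into three types ($z=0$, $x=az$, and $y=ax+bz$), plugs each into \eqref{eq:curve:q+3}, and exhibits a monomial with nonzero coefficient in the restriction --- in the main case $y=ax+bz$ with $ab\neq 0$ this is the coefficient $2a^2b$ of $x^{q+1}z^2$, which is exactly where $q$ odd is used. You instead parametrize an arbitrary $\mathbb{F}_q$-line, use the identity $U^qV-UV^q=[UV]\,(s^qt-st^q)$ to pull out the common factor $s^qt-st^q$ (a structural fact reflecting that all $\mathbb{F}_q$-points of the line automatically lie on any plane-filling curve), identify $([YZ],[ZX],[XY])$ with the dual coordinates of the line, and reduce the question to whether $(\ell_3+k\ell_2)X^2+\ell_1Y^2+\ell_2Z^2$ can vanish identically as a binary quadratic form; the Veronese-conic fact that squares of three pairwise non-proportional linear forms are linearly independent in odd characteristic then settles the generic case and makes transparent why the hypothesis $p\neq 2$ is needed. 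The degenerate cases you defer do go through exactly as you predict: when one of $X,Y,Z$ vanishes or two are proportional, the relation collapses to $\lambda U^2+\mu V^2\equiv 0$ with $U,V$ nonzero and non-proportional, which forces $\lambda=\mu=0$ (this needs only unique factorization, so it is valid in every characteristic) and contradicts the nonvanishing of the surviving determinant; so the case-check is routine, though for a complete write-up it should be carried out, coordinate by coordinate, since \eqref{eq:curve:q+3} is not symmetric in $x,y,z$. In comparison, your approach buys a uniform and more conceptual argument that isolates the role of odd characteristic, while the paper's buys brevity and avoids the parametrization and dual-coordinate bookkeeping.
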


\begin{proof} There are three types of $\mathbb{F}_q$-lines in $\mathbb{P}^2$.

\textbf{Type I.} The line $L$ is given by $z=0$.

The curve $C_k$ meets the line $\{z=0\}$ at finitely many points determined by $x^2(x^q y - x y^q)=0$. In particular, $\{z=0\}$ is not a component of $C$.

\textbf{Type II.} The line $L$ is given by $x=az$ for some $a\in\mathbb{F}_q$.

The curve $C_k$ meets the line $\{x=az\}$ at finitely many points determined by 
$$
(az)^2 ((az)^q y - (az) y^q) + y^2 (y^q z - y z^q) + (z^2 + k (az)^2)(z^q (az) - z (az)^q) = 0.
$$
After simplifying and using $a^q=a$, the last term cancels and we obtain:
$$
a^3 z^{q+2} y - a^3 z^3 y^q + y^{q+2} z - y^3 z^q = 0 
$$
In particular, $\{x=az\}$ is not a component of $C$.

\textbf{Type III.} The line $L$ is given by $y=ax+bz$ for some $a,b\in\mathbb{F}_q$.

If $a=0$ or $b=0$, then $y=bz$ or $y=ax$, and the analysis is very similar to the previous case. We will assume that $a\neq 0$ and $b\neq 0$. We substitute $y=ax+bz$ into the equation \eqref{eq:curve:q+3} and collect terms to obtain:
\begin{align*}
& (b+a^3-k) x^{q+2} z + (2a^2 b) x^{q+1} z^2 + (b^2 a - 1) x^q z^3 + \\ & (-b - a^3 + k) x^3 z^q + (-2ab) x^2 z^{q+1} + (-ab^2+1) x z^{q+2} = 0 
\end{align*}
The coefficient of $x^{q+1} z^2$ is $2a^2b$, which is nonzero since $q$ is odd (so $2\neq 0$), $a\neq 0$ and $b\neq 0$. Thus, $L$ is not a component of $C_k$. 
\end{proof}

We are now in a position to prove Theorem~\ref{thm:q+3:Fq^2:points} on the existence of $k\in\mathbb{F}_q$ such that the plane-filling curve $C_k$ is smooth at all its $\F_{q^2}$-points.

\begin{proof}[Proof of Theorem~\ref{thm:q+3:Fq^2:points}] The result follows immediately from Proposition~\ref{prop:existence-good-k}, Proposition~\ref{prop:criterion-Fq^2-points}, and Proposition~\ref{prop:Fq-components}. \end{proof}

\section{Higher degree plane-filling curves}\label{sect:higher-degree}

We begin by establishing Theorem~\ref{thm:higher-degree}, which provides a necessary and sufficient condition for the plane-filling curve $C_{k, r}$ to be smooth at all the $\F_q$-points.

\begin{proof}[Proof of Theorem~\ref{thm:higher-degree}]
We consider the curve $C_{k,r}$ given by the equation:
\begin{equation}
\label{eq:1001}
x^r\cdot (x^qy-xy^q) + y^r\cdot (y^qz-yz^q) + (z^r+kx^r)\cdot (z^qx - zx^q)=0.
\end{equation}
We analyze the singular locus of $C_{k,r}$ and get the equations:
\begin{equation}
\label{eq:1002}
rx^{r-1}\cdot (x^qy-xy^q) + x^r\cdot (-y^q) + krx^{r-1}\cdot (z^qx-zx^q)+ (z^r+kx^r)\cdot z^q=0
\end{equation}
\begin{equation}
\label{eq:1003}
x^r\cdot x^q  + ry^{r-1}\cdot (y^qz-yz^q) + y^r\cdot (-z^q) =0
\end{equation}
\begin{equation}
\label{eq:1004}
y^r\cdot y^q + rz^{r-1}\cdot (z^qx -zx^q) + (z^r+kx^r)\cdot (-x^q)=0.
\end{equation}

We next analyze the possibility that we have a singular point when $xyz=0$.

If $x=0$, then equation~\eqref{eq:1002} yields $z=0$, which is then employed in \eqref{eq:1004} to derive $y=0$, contradiction.

If $y=0$, then equation~\eqref{eq:1003} yields $x=0$ and then equation~\eqref{eq:1002} yields $z=0$, contradiction.

If $z=0$, then equation~\eqref{eq:1003} yields $x=0$ and then equation~\eqref{eq:1004} yields $y=0$, contradiction.

So, the only possible singular points are of the form $[x:1:z]$.

We search for possible singular points $[x:1:z]\in\mathbb{P}^2(\Fq)$. Then equations~\eqref{eq:1002},~\eqref{eq:1003}~and~\eqref{eq:1004} read:
\begin{equation}
\label{eq:1005}
-x^r+z^{r+1}+kx^rz=0
\end{equation}
\begin{equation}
\label{eq:1006}
x^{r+1}-z=0
\end{equation}
\begin{equation}
\label{eq:1007}
1-z^rx-kx^{r+1}=0.
\end{equation}
Substituting $z=x^{r+1}$ from equation~\eqref{eq:1006} into equations~\eqref{eq:1005}~and~\eqref{eq:1007}, we obtain
$$-x^r+x^{r^2+2r+1}+kx^{2r+1}=0\text{ and }1-x^{r^2+r+1}-kx^{r+1}=0,$$
that is, there exists a singular $\Fq$-rational point on $C_{k,r}$ if and only if there exists $x\in\Fq^\ast$ such that
\begin{equation}
\label{eq:1008}
x^{r^2+r+1}+kx^{r+1}-1=0,
\end{equation}
as desired. 

We end the proof by mentioning that some care is needed to treat the case when the characteristic $p$ of the field divides the degree of the curve (i.e., $p$ divides $r+1$ in this setting). Indeed, the singular locus of any projective curve $\{f=0\}$ is defined by $\{f=\frac{\partial f}{\partial x} = \frac{\partial f}{\partial y} =\frac{\partial f}{\partial z} =0\}$. When $p$ divides $\deg(f)$, it is \emph{not} enough to consider the points in the locus $\{\frac{\partial f}{\partial x} =\frac{\partial f}{\partial y} =\frac{\partial f}{\partial z} =0\}$. Fortunately, in our case, the $\Fq$-point $[x:1:z]$ is automatically on the curve $C_{k,r}$ because $C_{k,r}$ is plane-filling.
\end{proof} 

It may be natural to make a prediction identical to Conjecture~\ref{main:conjecture:q:odd} for higher-degree curves. However, some care is needed, as the following two examples show. We found these examples using Macaulay2~\cite{M2}.

\begin{ex} 
Let $r=5$, $q=11$, and $k=9$. The plane-filling curve $C_{9, 5}$ over $\F_{11}$ is smooth at all the $\F_{11}$-points because the polynomial $x^{31}+9x^{6}-1$ is an irreducible polynomial over $\F_{11}$. However, $C_{9, 5}$ is singular at two Galois-conjugate $\F_{11^{2}}$-points. 
\end{ex}

In the previous example, the curve $C_{9, 5}$ is irreducible over $\F_{11}$. Thus, $C_{9, 5}$ satisfies the two conditions of Theorem~\ref{prop:criterion-Fq^2-points} and yet it is singular at two $\mathbb{F}_{11^2}$-points. Since $\deg(C_{9, 5})=q+6$, we see that Remark~\ref{rem:criterion-Fq^2-points} is close to being sharp.

\begin{ex} 
Let $r=7$, $q=5$. In this case, the plane-filling curve $C_{k, 7}$ defined over $\F_{5}$ is singular for each $k\in \F_5$. Indeed, the associated polynomial $x^{57}+kx^{8}-1$ has an $\F_{5}$-root for $k\in \{0, 2, 3, 4\}$. For these values of $k$, the curve $C_{k, r}$ is singular at an $\F_5$-point. For $k=1$, the curve $C_{1, 7}$ is singular at four points, namely, two pairs of Galois-conjugate $\F_{5^{2}}$-points. 
\end{ex}

The two examples above illustrate that Conjecture~\ref{main:conjecture:q:odd} needs to be modified for plane-filling curves of degree $q+r+1$ when $r$ is arbitrary. We propose two related conjectures on the smoothness of the curve $C_{k, r}$ from Theorem~\ref{thm:higher-degree}. Recall that $C_{k, r}\subset \mathbb{P}^2$ is defined by
\begin{equation*}
    x^r (x^qy-xy^q) + y^r (y^qz-yz^q) + (z^r+kx^r) (z^qx - zx^q)=0
\end{equation*}
where $r\geq 2$ is a positive integer and $k\in\Fq$. 

\begin{conj}\label{main:conjecture:higher-degree-1} Let $r\geq 2$. There exists an integer $m\colonequals m(r)$ with the following property. For all finite fields $\Fq$ with cardinality $q>m$ and characteristic not dividing $r$, there exists some $k\in\Fq$ such that the curve $C_{k, r}$ is smooth.
\end{conj}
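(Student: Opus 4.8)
The plan is to combine a point count over $\Fq$ with a bound on those members of the pencil $\{C_{k,r}\}_{k\in\Fq}$ that acquire a singular point defined over a proper extension of $\Fq$.

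\emph{Step 1 (smoothness at $\Fq$-points for many $k$).} By Theorem~\ref{thm:higher-degree}, $C_{k,r}$ is smooth at every $\Fq$-point exactly when $g_k(t):=t^{r^2+r+1}+kt^{r+1}-1$ has no root in $\Fq$, so I would first extend Theorem~\ref{thm:good-k} to arbitrary $r$ with $p\nmid r$. Setting $\frac{1-x^{r^2+r+1}}{x^{r+1}}=\frac{1-y^{r^2+r+1}}{y^{r+1}}$, clearing denominators and dividing by $x-y$ produces the affine curve
$$
\mathcal{C}_r:\qquad x^{r+1}y^{r+1}\cdot\frac{x^{r^2}-y^{r^2}}{x-y}+\frac{x^{r+1}-y^{r+1}}{x-y}=0
$$
of degree $(r+1)^2$, which for $r=2$ recovers the curve of Lemma~\ref{lem:irreducible}. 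One checks, by the Eisenstein-type argument applied at the top-degree end, that $\mathcal{C}_r$ is geometrically irreducible: since $p\nmid r$, the form $x^{r^2}-y^{r^2}$ is squarefree, and because $\gcd(r+1,r^2)=1$ it is coprime to $xy$ and to $\frac{x^{r+1}-y^{r+1}}{x-y}$, so the top-degree part of the equation has a simple linear factor not dividing the bottom-degree part. Hasse--Weil then gives $\#\mathcal{C}_r(\Fq)\geq q+1-O_r(\sqrt q)$, and the clique count of Section~\ref{sect:q+3} (the components of the associated graph on $\Fq^\ast$ are the fibers of $x\mapsto\frac{1-x^{r^2+r+1}}{x^{r+1}}$, hence cliques of size at most $r^2+r+1$) yields $\geq c_r\,q-O_r(\sqrt q)$ values of $k\in\Fq$ for which $g_k$ has no $\Fq$-root, with $c_r>0$ depending only on $r$.

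\emph{Step 2 (structure of the singular locus of the pencil).} Write the defining form as $f_k=A+kB$; then $B=x^{r+1}z(z^{q-1}-x^{q-1})$ and, crucially, $\partial_y f_k=\partial_y A$ is independent of $k$. Exactly as in the proof of Theorem~\ref{thm:higher-degree} (the case $xyz=0$), no $C_{k,r}$ has a singular point on $\{x=0\}$, so every singular point of every $C_{k,r}$ lies on the fixed curve $\Gamma=\{\partial_y f=0\}$, of degree $q+r$. On $\{x\neq 0\}$ the equation $\partial_z f_k=\partial_z A-kx^{q+r}=0$ determines $k$ uniquely from the point, so each geometric point is a singular point of at most one member of the pencil; substituting this $k$ into $f_k=0$ and $\partial_x f_k=0$ gives two further curves $V_1,V_2$ of degree $O(q)$, and the set $W$ of points that are singular on some $C_{k,r}$ satisfies $W\subseteq\Gamma\cap V_1\cap V_2$. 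One sees that $W\cap\mathbb{P}^2(\Fq)$ is (essentially) the set of $\Fq$-points of $\{zy^r=x^{r+1}\}$ with $x\neq 0$, so $\#\big(W\cap\mathbb{P}^2(\Fq)\big)\sim q$; and among the $k$ surviving Step~1, the only way $C_{k,r}$ can fail to be smooth is to carry a singular point of some degree $n\geq 2$ over $\Fq$, which contributes a whole Galois orbit of size $n$ inside $W\setminus\mathbb{P}^2(\Fq)$. Thus the number of ``geometrically bad'' $k$ is at most $\tfrac12\#\big(W\setminus\mathbb{P}^2(\Fq)\big)$, and the conjecture follows once this is shown to be smaller than the surplus $c_r\,q-O_r(\sqrt q)$ of Step~1 --- for instance $o_r(q)$, or $O_r(\sqrt q)$.

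\emph{Where the difficulty lies.} The obstruction is controlling $W\setminus\mathbb{P}^2(\Fq)$. The crude Bézout bound gives only $\#(\Gamma\cap V_1)=\deg\Gamma\cdot\deg V_1=O(q^2)$, which is useless; to win one must split off the ``fixed'' high-degree components of $\Gamma,V_1,V_2$ (coming from factors such as $z^{q-1}-x^{q-1}$ of $B$ and of the base locus of the pencil) and estimate only the moving part --- equivalently, extract from the discriminant of the pencil a polynomial in $k$ of $q$-independent degree whose non-vanishing is equivalent to smoothness of $C_{k,r}$. For the smallest cases $r\in\{2,3\}$, where $\deg C_{k,r}\leq q+4$, one can at least dispose of the degree-$2$ points of $W$: by Proposition~\ref{prop:criterion-Fq^2-points} and Remark~\ref{rem:criterion-Fq^2-points}, together with the analogue of Proposition~\ref{prop:Fq-components} (that $C_{k,r}$ has no $\Fq$-linear component, provable by the same three-case computation), every $k$ surviving Step~1 already gives a curve smooth at all $\Fq^2$-points, reducing the problem to bounding the number of $k$ admitting a singular point of degree $n\geq 3$. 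Attacking this by intersecting $C_{k,r}$ with an $\Fq$-rational curve of small degree through a Galois orbit --- a line when the orbit is collinear (which forces $2n\leq r$, a contradiction), and otherwise a conic, where one must exploit that $C_{k,r}$ contains every $\Fq$-point of that conic --- is, I expect, the genuinely hard part of any complete proof.
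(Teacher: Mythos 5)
You should first note that the statement you are proving is Conjecture~\ref{main:conjecture:higher-degree-1}: the paper offers no proof of it, only Macaulay2 evidence, the $\Fq$-point criterion of Theorem~\ref{thm:higher-degree}, and the soft reduction of Proposition~\ref{prop:checking-finite-level}. So there is no paper argument to compare against, and your proposal must stand on its own --- and it does not, as you yourself acknowledge. Step~1 is plausible and genuinely in the spirit of the paper: the curve $\mathcal{C}_r$ you write down does specialize to Lemma~\ref{lem:irreducible} at $r=2$, the Eisenstein-type argument at the top-degree end looks workable when $p\nmid r$ (the $r^2$-th roots of unity are distinct and, since $\gcd(r^2,r+1)=1$, none of the corresponding linear forms divides the bottom-degree part), and the clique count then gives on the order of $q/(r^2+r+1)-O_r(\sqrt q)$ values of $k$ for which $C_{k,r}$ is smooth at all $\Fq$-points. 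That, however, only reproves (a generalization of) Theorem~\ref{thm:good-k}; it is not the content of the conjecture.

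The genuine gap is Step~2, and it is exactly the step you flag as open: you never bound the number of $k\in\Fq$ for which $C_{k,r}$ acquires a singular point over a proper extension of $\Fq$. The observation that $\partial_z f_k=\partial_z A-kx^{q+r}$ pins down $k$ at each geometric point with $x\neq 0$ is correct and useful, but the resulting containment $W\subseteq\Gamma\cap V_1\cap V_2$ only yields the Bézout bound $O(q^2)$, which you admit is useless; nothing in the proposal extracts the needed ``moving part'' or a discriminant-type polynomial in $k$ of $q$-independent degree, and the count $\#\bigl(W\cap\mathbb{P}^2(\Fq)\bigr)\sim q$ is irrelevant since Step~1 already handles rational singular points. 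The paper's own examples show this difficulty is real: for $(r,q)=(5,11)$, $k=9$ the curve is smooth at all $\F_{11}$-points yet singular at a pair of conjugate $\F_{11^2}$-points, and for $(r,q)\in\{(7,5),(13,3),(16,9),(17,7)\}$ every member of the pencil is singular, so any proof must use $q>m(r)$ in a way your sketch does not yet capture. Even your fallback for $r\in\{2,3\}$ (using Proposition~\ref{prop:criterion-Fq^2-points}, Remark~\ref{rem:criterion-Fq^2-points}, and the collinearity bound $2n\leq r$) only eliminates singular orbits of degree $2$ and collinear orbits; the non-collinear orbits of degree $n\geq 3$, which for $r=2$ are precisely the content of Conjecture~\ref{main:conjecture:q:odd}, remain untreated. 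In short: a sensible program, but the conjecture is not proved, because the key estimate on ``geometrically bad'' values of $k$ is missing.
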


Using Macaulay2~\cite{M2}, we enumerated through values of $r$ in the range $[2, 17]$ and $q$ in the range $[2, 100]$ with $\gcd(r, q)=1$. We found only the following pairs $(r, q)$ for which $C_{k, r}$ is singular for \emph{every} $k\in\Fq$: $(r, q) = (7, 5)$, $(13, 3)$, $(16, 9)$, and $(17, 7)$.

\begin{conj}\label{main:conjecture:higher-degree-2} Let $r\geq 2$. There exists an integer $s\colonequals s(r)$ with the following property. For all finite fields $\Fq$ with characteristic not dividing $r$, and for all $k\in \Fq$, if $C_{k, r}$ is smooth at all of its $\mathbb{F}_{q^{s}}$-points, then $C_{k, r}$ is smooth. 
\end{conj}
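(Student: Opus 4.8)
The plan is to reduce the smoothness of $C_{k,r}$ to a finiteness statement about a single auxiliary variety of bounded complexity, and then to exploit the fact that the $q$-power Frobenius acts as the identity on any subvariety of affine space defined over $\Fq$. First I would observe that the analysis of the locus $xyz=0$ in the proof of Theorem~\ref{thm:higher-degree} is purely formal: it uses only $r\geq 2$, and no hypothesis on $q$ or on the characteristic, so it applies verbatim over $\overline{\Fq}$ and shows that every singular point of $C_{k,r}$ has all coordinates nonzero, hence may be written $[x:1:z]$. Working in the chart $y=1$, I would set $y=1$ in $f$ and its three partial derivatives and then substitute a fresh variable $u$ for every occurrence of $x^q$ and a fresh variable $v$ for every occurrence of $z^q$; this produces four polynomials $\hat f,\hat f_x,\hat f_y,\hat f_z\in\Fq[x,z,u,v]$ of degree $\leq r+2$ whose common zero locus $W=W_{k,r}\subseteq\mathbb{A}^4$ is defined over $\Fq$ and has degree bounded purely in terms of $r$ (the parameter $k$ affects only coefficients, not degrees). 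By the computation already carried out in the proof of Theorem~\ref{thm:higher-degree}, the singular points $[x:1:z]$ of $C_{k,r}$ correspond bijectively to the $\overline{\Fq}$-points of $W$ of the special shape $(x,z,x^q,z^q)$.

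The key observation is then the following ``freshman's dream'' trick. Let $\pi_1,\pi_2\colon\mathbb{A}^4\to\mathbb{A}^2$ be the projections onto the first two and the last two coordinates. A point $(x,z,x^q,z^q)\in W$ has $(x,z)\in\pi_1(W)$ and $(x^q,z^q)\in\pi_2(W)$; since $\pi_2(W)$ is defined over $\Fq$ and raising to the $q$-th power is a ring homomorphism fixing $\Fq$, a polynomial with coefficients in $\Fq$ vanishes at $(x^q,z^q)$ if and only if it vanishes at $(x,z)$, so $(x,z)\in\pi_2(W)$ as well (identifying the two affine planes). Hence every singular point of $C_{k,r}$ lies above a point of $\pi_1(W)\cap\pi_2(W)$, a subscheme of $\mathbb{A}^2$ defined over $\Fq$ whose degree is bounded in terms of $r$ alone. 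Whenever $\dim\bigl(\pi_1(W)\cap\pi_2(W)\bigr)=0$, every point of this intersection---and therefore $[x:1:z]$---has residue field of degree $\leq(r+2)^8$ over $\Fq$, so $C_{k,r}$ is singular only at $\F_{q^{s}}$-points with $s=\operatorname{lcm}(1,\dots,(r+2)^8)$; taking $s(r)$ to be this number proves the conjecture for all pairs $(q,k)$ with $\dim\bigl(\pi_1(W_{k,r})\cap\pi_2(W_{k,r})\bigr)=0$. These two steps are the parts I would be confident about.

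The remaining, and I expect hardest, case is when $\pi_1(W)$ and $\pi_2(W)$ share a curve $Z_0$ (necessarily defined over $\Fq$): then $W$ contains a curve $W_0$ mapping dominantly onto $Z_0$ under both projections, i.e.\ a correspondence on $Z_0$, and the singular points of $C_{k,r}$ lying on $Z_0$ are exactly the $P\in Z_0$ with $\bigl(P,\phi(P)\bigr)\in W_0$, where $\phi$ is the Frobenius. This is a ``twisted fixed point'' condition, and a Lefschetz-type count allows as many as $\sim q$ solutions, so one cannot simply bound their number. To finish I would (i) bound the degrees and genera of the components of $Z_0$ and $W_0$ purely in terms of $r$; (ii) rule out the degenerate subcases---$W_0$ the graph of an automorphism of $Z_0$, or $C_{k,r}$ possessing an $\Fq$-linear or a non-reduced component---by a direct computation with the equation of $C_{k,r}$, in the spirit of Proposition~\ref{prop:Fq-components}; and (iii) in the surviving cases produce a single twisted fixed point of bounded degree by applying the Hasse--Weil bound to the relevant $\Fq$-twist of $Z_0$ once $q$ exceeds a constant depending on the bounded genus, the finitely many remaining small $q$ being checked by machine. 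The cleanest possible resolution would be to prove that this case never arises---that $\pi_1(W_{k,r})$ and $\pi_2(W_{k,r})$ never share a component---which is an irreducibility statement of the kind established in Lemma~\ref{lem:irreducible}; proving it uniformly over all $k$ and all $q$ is, I believe, the crux of the matter.
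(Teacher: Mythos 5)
This statement is not proved in the paper: it is posed as an open conjecture, and the only supporting result is Proposition~\ref{prop:checking-finite-level}, which is much weaker because the level $N$ produced there depends on $q$ (indeed on the particular pencil member), whereas the conjecture demands a single $s(r)$ uniform in $q$ and $k$. So your proposal must be judged as an attempted proof of an open problem, and as you yourself acknowledge, it is not complete. The first half is sound: the exclusion of singular points with $xyz=0$ in the proof of Theorem~\ref{thm:higher-degree} is indeed characteristic- and field-independent, and passing to the chart $y=1$, replacing $x^q,z^q$ by new variables $u,v$, gives an auxiliary scheme $W\subseteq\mathbb{A}^4$ of degree bounded in terms of $r$ alone (minor caveats: the exponent-splitting is only unambiguous for $q>r+2$, though the finitely many small $q$ for fixed $r$ can be absorbed into $s(r)$; and the Frobenius argument places $(x,z)$ in the Zariski closure $\overline{\pi_2(W)}$, not the image itself, which is all you need). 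When $\pi_1(W)\cap\overline{\pi_2(W)}$ is zero-dimensional, your conclusion that every singular point has degree bounded in terms of $r$ only, hence is an $\F_{q^{s}}$-point for $s=s(r)$, is correct.

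The genuine gap is the positive-dimensional case, and it is not a peripheral technicality but precisely where the difficulty of the conjecture sits. There you must show: given that $C_{k,r}$ has some singular point lying over a shared curve $Z_0$, it has one of degree bounded by a function of $r$ alone; equivalently, the correspondence $W_0\subseteq Z_0\times Z_0$ admits a twisted fixed point $\bigl(P,\phi(P)\bigr)$ with $[\F_q(P):\F_q]$ bounded. Your step (iii) does not deliver this in general. The twist-plus-Hasse--Weil device works only when $W_0$ is the graph of an automorphism $g$ of bounded order (then $\phi(P)=g(P)$ forces $\phi^{n}(P)=P$ with $n=\operatorname{ord}(g)$); for a genuine correspondence, or even for the graph of a non-invertible map, the $\sim q$ twisted fixed points guaranteed by a Lefschetz-type count carry no a priori bound on their individual degrees, and if $Z_0$ has genus $0$ or $1$ the Hurwitz bound on automorphism orders is unavailable as well. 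Ruling out the shared-component case altogether, as you suggest, would require an irreducibility/non-coincidence statement uniform in $k$ and $q$ far stronger than Lemma~\ref{lem:irreducible}, and nothing in the paper (whose evidence is purely computational, via Macaulay2, together with the examples showing singularities at conjugate $\F_{q^2}$-points do occur) provides it. Until that case is handled, your argument proves the conjecture only under the additional hypothesis $\dim\bigl(\pi_1(W_{k,r})\cap\overline{\pi_2(W_{k,r})}\bigr)=0$, not in general.
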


As a motivation for Conjecture~\ref{main:conjecture:higher-degree-2}, we mention the following general fact about pencils of plane curves. The family of plane curves $C_k$ forms a \emph{pencil} of plane curves since the parameter $k\in\mathbb{F}_q$ appears linearly in the defining equation. If $\mathcal{L}$ is a pencil of plane curves in $\mathbb{P}^2$ parametrized by $\mathbb{A}^1$, then $\Fq$-members of $\mathcal{L}$ are defined by $f(x, y, z) + k g(x, y, z)=0$ where $k\in\Fq$ is arbitrary. We will use $X_k$ to denote this plane curve in the following proposition.

\begin{prop}\label{prop:checking-finite-level}
Let $\mathcal{L}$ be a pencil of plane curves $\{X_k\}_{k\in\Fq}$ of degree $d$ defined over a finite field $\mathbb{F}_q$. Suppose that for every $s\geq 1$, there exists some $k\in\Fq$ such that $X_k$ is smooth at all of its $\mathbb{F}_{q^s}$-points. Then there exists some $\ell\in\Fq$ such that $X_{\ell}$ is smooth. 
\end{prop}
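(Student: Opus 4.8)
The plan is to argue by contradiction, and the only genuine input is that the index set $\Fq$ is finite, so that there are only $q$ members of the pencil to control simultaneously. Suppose, to the contrary, that $X_\ell$ is singular for every $\ell\in\Fq$; the goal is to manufacture a single exponent $s\geq 1$ for which the hypothesis fails.

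First, fix $\ell\in\Fq$. Since $X_\ell$ is not smooth, its singular locus is a nonempty Zariski-closed subset of $\mathbb{P}^2$ defined over $\Fq$ (it is the common zero set of $f+\ell g$ together with its three partial derivatives), and therefore contains a closed point whose residue field is $\mathbb{F}_{q^{s_\ell}}$ for some integer $s_\ell\geq 1$. Concretely, the Galois orbit of any one geometric singular point of $X_\ell$ is finite, of size $s_\ell$ say, and every point of that orbit is an $\mathbb{F}_{q^{s_\ell}}$-rational point at which $X_\ell$ is singular. Now set $s\colonequals\operatorname{lcm}\{\,s_\ell : \ell\in\Fq\,\}$, which is a well-defined positive integer precisely because $\Fq$ has only finitely many elements. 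Since $s_\ell\mid s$, the singular points produced above are in particular $\mathbb{F}_{q^s}$-points, so for every $\ell\in\Fq$ the curve $X_\ell$ has a singular $\mathbb{F}_{q^s}$-point; equivalently, no member of $\mathcal{L}$ is smooth at all of its $\mathbb{F}_{q^s}$-points. This contradicts the hypothesis applied with this value of $s$, and the proposition follows.

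No real obstacle is expected. The one point that needs a word of justification is the claim that a non-smooth curve over $\Fq$ has a singular closed point of finite degree over $\Fq$; this holds because its singular locus is a nonempty scheme of finite type over $\Fq$, and in particular it covers the case of a nonreduced $X_\ell$ whose singular locus is one-dimensional. It is worth noting that the pencil structure plays no role in this argument: the same reasoning applies verbatim to any finite family of plane curves indexed by $\Fq$. The pencil hypothesis becomes essential only for the sharper conjectural statement in which $s$ is required to depend on $d$ alone (cf. Conjecture~\ref{main:conjecture:higher-degree-2}); establishing such a uniform bound would involve analyzing the discriminant locus of the pencil together with its base points, and is the genuinely difficult direction.
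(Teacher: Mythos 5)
Your argument is correct and is essentially the paper's own proof: for each singular member pick a finite-degree singular point, combine the finitely many degrees (you use the lcm, the paper uses the product) to get one exponent $s$ contradicting the hypothesis. The only difference is that you spell out why a singular curve over $\Fq$ has a singular closed point of finite degree, which the paper leaves implicit.
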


\begin{proof}
Assume, to the contrary, that $X_k$ is singular for each $k\in \Fq$. 
For each $k\in\Fq$, let $n_k\in\mathbb{N}$ such that the curve $X_k$ is singular at some $\F_{q^{n_k}}$-point. Let $N\colonequals \prod_{k\in\Fq} n_k$. By construction, no $X_k$ is smooth at all of its $\F_{q^N}$-points, contradicting the hypothesis. 
\end{proof}

Proposition~\ref{prop:checking-finite-level} asserts that to find a smooth member of any pencil $\mathcal{L}$ defined over $\Fq$, it is sufficient to find a member which is smooth at all points of an (arbitrary) finite degree. Conjecture~\ref{main:conjecture:higher-degree-2} strengthens the conclusion by predicting that for a pencil of plane-filling curves, one finds a smooth member by only checking smoothness at all points of \emph{fixed} finite degree.

\begin{bibdiv}
\begin{biblist}

\bib{AG23}{article}{
    AUTHOR = {Asgarli, Shamil},
    AUTHOR = {Ghioca, Dragos},
     TITLE = {Tangent-filling plane curves over finite fields},
   JOURNAL = {Bull. Aust. Math. Soc.},
    YEAR = {2023},
    pages = {published online on May 2, 2023},
    eprint = {https://arxiv.org/abs/2302.13420},
}

\bib{AGY23}{article}{
Author ={Asgarli, Shamil},
Author = {Ghioca, Dragos},
Author = {Yip, Chi Hoi},
title={Plane curves giving rise to blocking sets over finite fields},
journal = {Designs, Codes and Cryptography},
pages = {to appear},
 eprint = {https://arxiv.org/abs/2208.13299},
year={2023},
}

\bib{AP96}{incollection}{
    AUTHOR = {Aubry, Yves},
    AUTHOR = {Perret, Marc},
     TITLE = {A {W}eil theorem for singular curves},
 BOOKTITLE = {Arithmetic, geometry and coding theory ({L}uminy, 1993)},
     PAGES = {1--7},
 PUBLISHER = {de Gruyter, Berlin},
      YEAR = {1996},
}

\bib{Dur18}{article}{
    AUTHOR = {Duran Cunha, Gregory},
     TITLE = {Curves containing all points of a finite projective {G}alois
              plane},
   JOURNAL = {J. Pure Appl. Algebra},
    VOLUME = {222},
      YEAR = {2018},
    NUMBER = {10},
     PAGES = {2964--2974},
      ISSN = {0022-4049,1873-1376},
}

\bib{Gab01}{article}{
    AUTHOR = {Gabber, O.},
     TITLE = {On space filling curves and {A}lbanese varieties},
   JOURNAL = {Geom. Funct. Anal.},
    VOLUME = {11},
      YEAR = {2001},
    NUMBER = {6},
     PAGES = {1192--1200},
      ISSN = {1016-443X,1420-8970},
}

\bib{Hom20}{article}{
    AUTHOR = {Homma, Masaaki},
     TITLE = {Fragments of plane filling curves of degree {$q+2$} over the
              finite field of {$q$} elements, and of affine-plane filling
              curves of degree {$q+1$}},
   JOURNAL = {Linear Algebra Appl.},
    VOLUME = {589},
      YEAR = {2020},
     PAGES = {9--27},
      ISSN = {0024-3795,1873-1856},
}

\bib{HK13}{article}{
    AUTHOR = {Homma, Masaaki},
    AUTHOR = {Kim, Seon Jeong},
     TITLE = {Nonsingular plane filling curves of minimum degree over a
              finite field and their automorphism groups: supplements to a
              work of {T}allini},
   JOURNAL = {Linear Algebra Appl.},
    VOLUME = {438},
      YEAR = {2013},
    NUMBER = {3},
     PAGES = {969--985},
      ISSN = {0024-3795,1873-1856},
}

\bib{HK23}{article}{
    AUTHOR = {Homma, Masaaki},
    AUTHOR = {Kim, Seon Jeong},
     TITLE = {Filling curves for {$\mathbb P^1\times\mathbb P^1$}},
   JOURNAL = {Comm. Algebra},
    VOLUME = {51},
      YEAR = {2023},
    NUMBER = {6},
     PAGES = {2680--2687},
      ISSN = {0092-7872,1532-4125},
}

\bib{Kat99}{article}{
    AUTHOR = {Katz, Nicholas M.},
     TITLE = {Space filling curves over finite fields},
   JOURNAL = {Math. Res. Lett.},
    VOLUME = {6},
      YEAR = {1999},
    NUMBER = {5-6},
     PAGES = {613--624},
      ISSN = {1073-2780},
}

\bib{M2}{misc}{
          author = {Grayson, Daniel R.},
          author = {Stillman, Michael E.},
          title = {Macaulay2, a software system for research in algebraic geometry},
          note = {\url{http://www.math.uiuc.edu/Macaulay2/}}
        }

\bib{Pea90}{article}{
    AUTHOR = {Peano, Giuseppe},
     TITLE = {Sur une courbe, qui remplit toute une aire plane},
   JOURNAL = {Math. Ann.},
    VOLUME = {36},
      YEAR = {1890},
    NUMBER = {1},
     PAGES = {157--160},
      ISSN = {0025-5831,1432-1807},
}

\bib{Poo04}{article}{
    AUTHOR = {Poonen, Bjorn},
     TITLE = {Bertini theorems over finite fields},
   JOURNAL = {Ann. of Math. (2)},
    VOLUME = {160},
      YEAR = {2004},
    NUMBER = {3},
     PAGES = {1099--1127},
      ISSN = {0003-486X,1939-8980},
}

\bib{Tal61a}{article}{
    AUTHOR = {Tallini, Giuseppe},
     TITLE = {Le ipersuperficie irriducibili d'ordine minimo che invadono
              uno spazio di {G}alois},
   JOURNAL = {Atti Accad. Naz. Lincei Rend. Cl. Sci. Fis. Mat. Nat. (8)},
    VOLUME = {30},
      YEAR = {1961},
     PAGES = {706--712},
      ISSN = {0392-7881},
}

\bib{Tal61b}{article}{
    AUTHOR = {Tallini, Giuseppe},
     TITLE = {Sulle ipersuperficie irriducibili d'ordine minimo che
              contengono tutti i punti di uno spazio di {G}alois
              {$S\sb{r,q}$}},
   JOURNAL = {Rend. Mat. e Appl. (5)},
    VOLUME = {20},
      YEAR = {1961},
     PAGES = {431--479},
      ISSN = {1120-7175},
}

\end{biblist}
\end{bibdiv}

\end{document}